\documentclass[a4paper,12pt]{article}
\usepackage{amsmath}
\usepackage[margin=1.2in]{geometry}
\usepackage{amsthm}
\usepackage{amssymb}
\usepackage{framed}
\usepackage[inline]{enumitem}
\usepackage{xcolor}
\usepackage{bm} 
\usepackage[all]{xy}
\usepackage[utf8]{inputenc}
\usepackage{setspace}
\usepackage{currfile}
\usepackage{comment}
\usepackage{mathabx}
\usepackage{textcomp}
\usepackage{hyperref}
\usepackage{thmtools}
\usepackage{accents}
\usepackage[nameinlink]{cleveref}

\newcommand\starred[1]{\accentset{\star}{#1}}

\usepackage[backend=biber,style=alphabetic,sorting=nyt,firstinits=true,language=english,maxbibnames=10]{biblatex}

\hypersetup{
    colorlinks,
    linkcolor={blue!80!black},
    citecolor={blue!80!black},
    urlcolor={blue!80!black}
}

\renewbibmacro{in:}{%
  \ifentrytype{article}{}{\printtext{\bibstring{in}\intitlepunct}}}

\DeclareSourcemap{
  \maps[datatype=bibtex, overwrite]{
    \map{
      \step[fieldset=language, null]
       \step[fieldset=shorthand, null]
    }
  }
}

\bibliography{efree.bib}

\newcommand{\ud}{\mathrm{d}}

\DeclareMathOperator{\supp}{supp}
\DeclareMathOperator{\Hom}{Hom}

\DeclareMathOperator{\Div}{Div}
\DeclareMathOperator{\id}{id}

\newcommand{\csub}{\Subset}

\newcommand{\bN}{\mathbb{N}}
\DeclareMathOperator{\dif}{d \!}
\newcommand{\coleq}{\mathrel{\mathop:}=}

\newcommand{\cA}{\mathcal{A}}
\newcommand{\cB}{\mathcal{B}}
\newcommand{\cC}{\mathcal{C}}
\newcommand{\cI}{\mathcal{I}}
\newcommand{\cE}{\mathcal{E}}

\newcommand{\cP}{\mathcal{P}}

\newcommand{\bC}{\mathbb{C}}
\newcommand{\bR}{\mathbb{R}}

\newcommand{\cD}{\mathcal{D}}

\newcommand{\cL}{\mathcal{L}}
\newcommand{\cM}{\mathcal{M}}
\newcommand{\cN}{\mathcal{N}}
\newcommand{\cS}{\mathcal{S}}

\newcommand{\cG}{\mathcal{G}}
\newcommand{\cU}{\mathcal{U}}

\newcommand{\D}{\mathrm{D}}
\newcommand{\DSK}{\mathrm{D}^{\mathrm{SK}}}

\newcommand{\csn}{\mathrm{csn}}

\newcommand{\pd}{\partial}

\newcommand{\abso}[1]{\left|#1\right|}
\newcommand{\norm}[1]{\lVert#1\rVert}

\newcommand{\basice}{\cB^{\textrm{c}}}
\newcommand{\quotiente}{\cG^{\textrm{c}}}
\newcommand{\mode}{\cM^{\textrm{c}}}
\newcommand{\nege}{\cN^{\textrm{c}}}
\newcommand{\iotae}{\iota^{\textrm{c}}}
\newcommand{\sigmae}{\sigma^{\textrm{c}}}

\declaretheorem[name=Theorem,refname={theorem,theorems},Refname={Theorem,Theorems}]{theorem}
\declaretheorem[name=Definition,refname={definition,definitions},Refname={Definition,Definitions},sibling=theorem]{definition}
\declaretheorem[name=Proposition,refname={proposition,propositions},Refname={Proposition,Propositions},sibling=theorem]{proposition}

\declaretheorem[name=Lemma,refname={lemma,lemmas},Refname={Lemma,Lemmas},sibling=theorem]{lemma}
\declaretheorem[name=Remark,refname={remark,remarks},Refname={Remark,Remarks},sibling=theorem]{remark}

\newcommand{\e}{\varepsilon}
\title{Colombeau algebras without asymptotics}
\author{Eduard A.~Nigsch\footnote{Wolfgang Pauli Institute, Oskar-Morgenstern-Platz 1, 1090 Vienna, Austria.\newline email:\href{mailto:eduard.nigsch@univie.ac.at}{eduard.nigsch@univie.ac.at}}}

\setlength{\parindent}{0pt}
\setlength{\parskip}{1.5ex}


\begin{document}

\maketitle

\begin{abstract}
We present a construction of algebras of generalized functions of Colombeau-type which, instead of asymptotic estimates with respect to a regularization parameter, employs only topological estimates on certain spaces of kernels for its definition.
\end{abstract}

{\bfseries MSC2010 Classification:} 46F30, 46F05

{\bfseries Keywords:} Nonlinear generalized functions, Colombeau algebras, asymptotic estimates, elementary Colombeau algebra, diffeomorphism invariance


\section{Introduction}

Colombeau algebras, as introduced by J.~F.~Colombeau \cite{ColNew, ColElem}, today represent the most widely studied approach to embedding the space of Schwartz distributions into an algebra of generalized functions such that the product of smooth functions as well as partial derivatives of distributions are preserved. These algebras have found numerous applications in situations involving singular objects, differentiation and nonlinear operations (see, e.g., \cite{MOBook, GKOS, Nedeljkov}).

All constructions of Colombeau algebras so far incorporate certain \emph{asymptotic estimates} for the definition of the spaces of moderate and negligible functions, the quotient of which constitutes the algebra. There is a certain degree of freedom in the asymptotic scale employed for these estimates; while commonly a polynomial scale is used, generalizations in several directions are possible. For an overview we refer to works on asymptotic scales \cite{0919.46027,zbMATH01417251}, $(\cC, \cE, \cP)$-algebras \cite{zbMATH05590471}, sequences spaces with exponent weights \cite{1126.46030} and asymptotic gauges \cite{zbMATH06552177}.

In this article we will present an algebra of generalized functions which instead of asymptotic estimates employs only topological estimates on certain spaces of kernels for its definition. This is a direct generalization of the usual seminorm estimates valid for distributions.

We will first develop the most general setting in the local scalar case, namely that of diffeomorphism invariant full Colombeau algebras. We will then derive a simpler variant, similar to Colombeau's elementary algebra. Finally, we give canonical mappings into the most important Colombeau algebras, which points to a certain universality of the construction offered here.

\section{Preliminaries}

$\bN$ and $\bN_0$ denote the sets of positive and non-negative integers, respectively, and $\bR^+$ the set of nonnegative real numbers. Concerning distribution theory we use the notation and terminology of L.~Schwartz \cite{TD}.

Given any subsets $K,L \subseteq \bR^n$ (with $n \in \bN$) the relation $K \csub L$ means that $K$ is compact and contained in the interior $L^\circ$ of $L$.

Let $\Omega \subseteq \bR^n$ be open. $C^\infty(\Omega)$ is the space of complex-valued smooth functions on $\Omega$. For any $K,L \csub \Omega$, $m,l \in \bN_0$ and any bounded subset $B \subseteq C^\infty(\Omega)$ we set
\begin{alignat*}{2}
\norm{f}_{K,m} & \coleq \sup_{x \in K, \abso{\alpha} \le m} \abso{\pd^\alpha f(x)} & \qquad & (f \in C^\infty(\Omega)), \\
\norm{\vec\varphi}_{K,m; L, l} & \coleq \sup_{\substack{x \in K, \abso{\alpha} \le m \\ y \in L, \abso{\beta} \le l}} \abso{ \pd_x^\alpha \pd_y^\beta \vec\varphi(x)(y) } & \qquad & (\vec\varphi \in C^\infty(\Omega, \cD(\Omega))), \\
\norm{\vec\varphi}_{K,m; B} & \coleq \sup_{\substack{x \in K, \abso{\alpha} \le m \\ f \in B}} \abso{ \langle f(y), \pd_x^\alpha \vec\varphi(x)(y) \rangle } & \qquad & (\vec\varphi \in C^\infty(\Omega, \cE'(\Omega))).
\end{alignat*}
Note that $\norm{\cdot}_{K,m}$, $\norm{\cdot}_{K,m; L, l}$ and $\norm{\cdot}_{K,m; B}$ are continuous seminorms on the respective spaces.

We define $\vec\delta \in C^\infty(\Omega, \cE'(\Omega))$ by $\vec\delta(x) \coleq \delta_x$ for $x \in \Omega$, where $\delta_x$ is the delta distribution at $x$.

$\cD_L(\Omega)$ is the space of test functions on $\Omega$ with support in $L$. For two locally convex spaces $E$ and $F$, $\cL(E,F)$ denotes the space of linear continuous mappings from $E$ to $F$, endowed with the topology of bounded convergence. By $\cU_x(\Omega)$ we denote the filter base of open neighborhoods of a point $x$ in $\Omega$, and by $\cU_K(\Omega)$ the filter base of open neighborhoods of $K$. By $\csn(E)$ we denote the set of continuous seminorms of a locally convex space $E$. $B_r(x) \coleq \{ y \in \bR^n : \norm{y-x} < r \}$ is the open Euclidean ball of radius $r>0$ at $x \in \bR^n$.

Our notion of smooth functions between arbitrary locally convex spaces is that of convenient calculus \cite{KM}. In particular, $\ud^k f$ denotes the $k$-th differential of a smooth mapping $f$.

\section{Construction of the algebra}\label{sec_quot}

Throughout this section let $\Omega \subseteq \bR^n$ be a fixed open set. Let $\cC$ be the category of locally convex spaces with smooth mappings in the sense of convenient calculus as morphisms.

\newcommand{\unterstrich}{{-}}

\begin{definition}Consider $C^\infty(\unterstrich, \cD(\Omega))$ and $C^\infty(\unterstrich)$ as sheaves with values in $\cC$. We define the \emph{basic space} of nonlinear generalized functions on $\Omega$ to be the set of sheaf homomorphisms
\[ \cB(\Omega) \coleq \Hom ( C^\infty(\unterstrich, \cD(\Omega)), C^\infty(\unterstrich)). \] 
\end{definition}
Hence, an element of $\cB(\Omega)$ is given by a family $(R_U)_U$ of mappings
\[ R_U \in C^\infty( C^\infty(U, \cD(\Omega)), C^\infty(U))\qquad (U \subseteq \Omega\ \textrm{open})\]
satisfying $R_U(\vec\varphi)|_V = R_V(\vec\varphi|_V)$ for all open subsets $V \subseteq U$ and $\vec\varphi \in C^\infty(U, \cD(\Omega))$. We will casually write $R$ in place of $R_U$.

\begin{remark}The basic space $\cB(\Omega)$ can be identified with the set of all mappings $R \in C^\infty( C^\infty(\Omega, \cD(\Omega)), C^\infty(\Omega))$ such that for any open subset $U \subseteq \Omega$ and $\vec\varphi, \vec\psi \in C^\infty(\Omega, \cD(\Omega))$ the equality $\vec\varphi|_U = \vec\psi|_U$ implies $R(\vec\varphi)|_U = R(\vec\psi)|_U$ (cf.~\cite{specfull}).
\end{remark}

$\cB(\Omega)$ is a $C^\infty(\Omega)$-module with multiplication
\[ (f \cdot R)_U(\vec\varphi) = f|_U \cdot R_U(\vec\varphi) \]
for $R \in \cB(\Omega)$, $f \in C^\infty(\Omega)$, $U \subseteq \Omega$ open and $\vec\varphi \in C^\infty(U, \cD(\Omega))$. Moreover, it is an associative commutative algebra with product $(R \cdot S)_U(\vec\varphi) \coleq R_U(\vec\varphi) \cdot S_U(\vec\varphi)$.

A distribution $u \in \cD'(\Omega)$ defines a sheaf morphism from $C^\infty(\unterstrich, \cD(\Omega))$ to $C^\infty(\unterstrich)$. In fact, for $U \subseteq \Omega$ open and $\vec\varphi \in C^\infty(U, \cD(\Omega))$ the function $x \mapsto \langle u, \varphi(x) \rangle$ is an element of $C^\infty(U)$ (see \cite[Chap.~IV, \S 1, Th.~II, p.~105]{TD} or \cite[Theorem 40.2, p.~416]{Treves}). More abstractly, this can be seen using the theory of topological tensor products \cite{FDVV,zbMATH03145498,Treves} as follows:
\[ C^\infty(U, \cD(\Omega)) \cong C^\infty(U) \widehat\otimes \cD(\Omega) \cong \cL ( \cD'(\Omega), C^\infty(U)), \]
where $C^\infty(U) \widehat\otimes \cD(\Omega)$ denotes the completed projective tensor product of $C^\infty(U)$ and $\cD(\Omega)$.
The assignment $\vec\varphi \mapsto \langle u, \vec\varphi \rangle$ is smooth, being linear and continuous \cite[1.3, p.~9]{KM}. Hence, we have the following embeddings of distributions and smooth functions into $\cB(\Omega)$:
\begin{definition}
 We define $\iota \colon \cD'(\Omega) \to \cB(\Omega)$ and $\sigma \colon C^\infty(\Omega) \to \cB(\Omega)$ by
\begin{alignat*}{2}
 (\iota u)(\vec\varphi)(x) & \coleq \langle u, \vec\varphi(x) \rangle & \qquad &(u \in \cD'(\Omega)) \\
 (\sigma f)(\vec\varphi)(x) & \coleq f(x) & \qquad &(f \in C^\infty(\Omega))
\end{alignat*}
for $\vec\varphi \in C^\infty(U, \cD(\Omega))$ with $U \subseteq \Omega$ open and $x \in U$.
\end{definition}
Clearly $\iota$ is linear and $\sigma$ is an algebra homomorphism. Directional derivatives on $\cB(\Omega)$ then are defined as follows:
\begin{definition}
Let $X \in C^\infty(\Omega, \bR^n)$ be a smooth vector field and $R \in \cB(\Omega)$. We define derivatives $\widetilde \D_X \colon \cB(\Omega) \to \cB(\Omega)$ and $\widehat \D_X \colon \cB(\Omega) \to \cB(\Omega)$
by
\begin{align*}
(\widetilde \D_X R)(\vec\varphi) & \coleq \D_X ( R_U ( \vec\varphi)) \\
(\widehat \D_X R)(\vec\varphi) & \coleq - \dif R_U ( \vec\varphi) (\DSK_X \vec\varphi) + \D_X ( R_U ( \vec\varphi))\\
\intertext{for $\vec\varphi \in C^\infty(U, \cD(\Omega))$ with $U \subseteq \Omega$ open, where we set}
\DSK_X \vec\varphi &\coleq \D_X \vec\varphi + \D_X^w \circ \vec\varphi.
\end{align*}
\end{definition}
Here, $(\D_X \vec\varphi)(x)$ is the directional derivative of $\vec\varphi$ at $x$ in direction $X(x)$ and $(\D_X^\omega \circ \vec\varphi)(x)$ is the Lie derivative of $\vec\varphi(x)$ considered as a differential form, given by $\D_X^\omega ( \vec\varphi(x)) = \D_X ( \vec\varphi(x)) + (\Div X)(x) \cdot \vec\varphi(x)$.

Note that both $\widetilde \D_X$ and $\widehat \D_X$ satisfy the Leibniz rule. We have
\[ \widetilde \D_x \circ \sigma = \sigma \circ \widetilde \D_X, \quad \widehat \D_X \circ \sigma = \sigma \circ \widehat \D_X, \quad \widehat\D_X \circ \iota = \iota \circ \widehat \D_X. \]
While $\widetilde \D_X$ is $C^\infty(\Omega)$-linear in $X$, $\widehat \D_X$ is only $\bC$-linear in $X$. We refer to \cite{papernew, bigone} for a discussion of the role of these derivatives in differential geometry.

\newcommand{\poly}{\cP}
\newcommand{\poli}{\cI}

\begin{definition}
For $k \in \bN_0$ we set
\begin{align*}
 \poly_k & \coleq \bR^+ [y_0, \dotsc, y_k], \\
 \poli_k & \coleq \{ \lambda \in \bR^+ [y_0, \dotsc, y_k, z_0, \dotsc, z_k]\ |\ \lambda(y_0, \dotsc, y_k, 0, \dotsc, 0) = 0 \}.
\end{align*}
\end{definition}

More explicitly, $\poly_k$ is the commutative semiring of polynomials in the $k+1$ commuting variables $y_0, \dotsc, y_k$ with coefficients in $\bR^+$. Similarly, $\poli_k$ is the commutative semiring in the $2(k+1)$ commuting variables $y_0, \dotsc, y_k, z_0, \dotsc, z_k$ with coefficients in $\bR^+$ and such that, if $\lambda \in \poli_k$ is given by the finite sum
\[ \lambda = \sum_{\alpha,\beta \in \bN_0^{k+1}} \lambda_{\alpha \beta} y^\alpha z^\beta, \]
then $\lambda_{\alpha 0} = 0$ for all $\alpha$. Note that $\poly_k$ is a subsemiring of $\poly_{k+1}$ and $\poli_k$ a subsemiring of $\poli_{k+1}$. Furthermore, $\poli_k$ is an ideal in $\poly_k$ if $\poly_k$ is considered as a subsemiring of $\bR^+ [y_0, \dotsc, y_k, z_0, \dotsc, z_k]$. Given $\lambda \in \poly_k$ and $y_i \le y_i'$ for $i=0 \dotsc k$ we have $\lambda(y) \le \lambda(y')$. For $\lambda, \mu \in \poly_k$ we write $\lambda \le \mu$ if $\lambda(y) \le \mu(y)$ for all $y \in (\bR^+)^{k+1}$, and similarly for $\lambda, \mu \in \poli_k$.

We can now formulate the following definitions of moderateness and negligibility, not involving any asymptotic estimates:

\begin{definition}\label{def_mod}
 An element $R \in \cB(\Omega)$ is called \emph{moderate} if
\begin{gather*}
(\forall x \in \Omega)\ (\exists U \in \cU_x(\Omega))\ (\forall K,L \csub U)\ (\forall m,k \in \bN_0)\\
(\exists c,l \in \bN_0)\ (\exists \lambda \in \poly_k) \ (\forall \vec\varphi_0,\dotsc,\vec\varphi_k \in C^\infty(U, \cD_L(U))):\\
\norm{ \ud^k R(\vec\varphi_0)(\vec\varphi_1,\dotsc,\vec\varphi_k)}_{K, m} \le \lambda ( \norm{\vec\varphi_0}_{K,c; L, l}, \dotsc, \norm{\vec\varphi_k}_{K,c; L, l}).
\end{gather*}
The subset of all moderate elements of $\cB(\Omega)$ is denoted by $\cM(\Omega)$.
\end{definition}

\begin{definition}\label{def_negl}
 An element $R \in \cB(\Omega)$ is called \emph{negligible} if
\begin{gather*}
(\forall x \in \Omega)\ (\exists U \in \cU_x(\Omega))\ (\forall K,L \csub U)\ (\forall m,k \in \bN_0)\ (\exists c,l \in \bN_0)\\
(\exists \lambda \in \poli_k)\ (\exists B \subseteq C^\infty(\Omega)\ \textrm{bounded})\ (\forall \vec\varphi_0, \dotsc, \vec\varphi_k \in C^\infty(U, \cD_L(U))):\\
\norm{\ud^kR(\vec\varphi_0)(\vec\varphi_1,\dotsc,\vec\varphi_k)}_{K, m} \\
\le \lambda ( \norm{\vec\varphi_0}_{K,c; L, l}, \dotsc, \norm{\vec\varphi_k}_{K,c; L, l}, \norm{ \vec\varphi_0 - \vec\delta}_{K, c; B}, \norm{ \vec\varphi_1}_{K, c; B}, \dotsc, \norm{ \vec\varphi_k}_{K, c; B}).
\end{gather*}
The subset of all negligible elements of $\cB(\Omega)$ is denoted by $\cN(\Omega)$.
\end{definition}

It is worthwile to discuss possible simplifications of these definitions, which at this stage should be considered more as a proof of concept than as the definite form they should have. First, we note that we cannot replace $(\forall x \in \Omega)\ (\exists U \in \cU_x(\Omega))\ (\forall K,L \csub U)$ by $(\forall K,L \csub \Omega)$. In fact, in the second case $K$ and $L$ can be distant from each other, while in the first case it suffices to control the situation where $K$ and $L$ are close to each other. However, the following result shows that we can always assume $K \csub L$ and that the $\vec\varphi_0,\dotsc,\vec\varphi_k$ are given merely on an arbitrary open neighborhood of $K$, i.e., as elements of the direct limit $C^\infty(K, \cD_L(\Omega)) \coleq \varinjlim_{V \in \cU_K(\Omega)} C^\infty(V, \cD_L(\Omega))$:
\begin{proposition}\label{prop_simpl}
 Let $R \in \cB(\Omega)$. Then $R$ is moderate if and only if
\begin{gather*}
(\forall x \in \Omega)\ (\exists U \in \cU_x(\Omega))\ (\forall K,L \csub U: K \csub L)\ (\forall m,k \in \bN_0)\\
(\exists c,l \in \bN_0)\ (\exists \lambda \in \poly_k) \ (\forall \vec\varphi_0,\dotsc,\vec\varphi_k \in C^\infty(K, \cD_L(U))):\\
\norm{ \ud^k R(\vec\varphi_0)(\vec\varphi_1,\dotsc,\vec\varphi_k)}_{K, m} \le \lambda ( \norm{\vec\varphi_0}_{K,c; L, l}, \dotsc, \norm{\vec\varphi_k}_{K,c; L, l}).
\end{gather*}
Similarly, $R$ is negligible if and only if
\begin{gather*}
(\forall x \in \Omega)\ (\exists U \in \cU_x(\Omega))\ (\forall K,L \csub U: K \csub L)\ (\forall m,k \in \bN_0)\ (\exists c,l \in \bN_0)\\
(\exists \lambda \in \poli_k)\ (\exists B \subseteq C^\infty(U)\ \textrm{bounded})\ (\forall \vec\varphi_0, \dotsc, \vec\varphi_k \in C^\infty(K, \cD_L(U))):\\
\norm{\ud^kR(\vec\varphi_0)(\vec\varphi_1,\dotsc,\vec\varphi_k)}_{K, m} \\
\le \lambda ( \norm{\vec\varphi_0}_{K,c; L, l}, \dotsc, \norm{\vec\varphi_k}_{K,c; L, l}, \norm{ \vec\varphi_0 - \vec\delta}_{K, c; B}, \norm{ \vec\varphi_1}_{K, c; B}, \dotsc, \norm{ \vec\varphi_k}_{K, c; B}).
\end{gather*}
\end{proposition}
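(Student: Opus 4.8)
The plan is to establish the two stated equivalences by proving each implication separately; the arguments for moderateness and for negligibility are identical except that in the negligible case one carries along the extra seminorms $\norm{\cdot}_{K,c;B}$ and the bounded set $B$, so I will only flag the additional points these require. In each implication the neighbourhood $U$ witnessing one condition will also serve as the witness for the other, so only the inner quantifiers need to be matched up.

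\emph{From \Cref{def_mod}/\Cref{def_negl} to the simplified form.} Keep the same $U$, fix $K,L\csub U$ with $K\csub L$ and $m,k\in\bN_0$, and let $c,l,\lambda$ — and, in the negligible case, the restriction $B|_U$ of the given bounded $B\subseteq C^\infty(\Omega)$ — be what the definition supplies for this $K,L,m,k$. Given $\vec\varphi_0,\dotsc,\vec\varphi_k\in C^\infty(K,\cD_L(U))$, represent them by $\vec\varphi_i'\in C^\infty(V,\cD_L(U))$ on a common $V\in\cU_K(\Omega)$ with $V\subseteq U$, pick $\chi\in\cD(V)$ with $\chi\equiv1$ on a neighbourhood $W$ of $K$, and set $\tilde\varphi_i\coleq\chi\cdot\vec\varphi_i'$, extended by $0$ outside $\supp\chi$. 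Since scalar multiplication $\bC\times\cD_L(U)\to\cD_L(U)$ is bounded bilinear and the two local descriptions agree on $V\setminus\supp\chi$, each $\tilde\varphi_i$ is a genuine element of $C^\infty(U,\cD_L(U))$. Now apply the definitional estimate to the $\tilde\varphi_i$ and restrict its left-hand side to $W\supseteq K$: restriction maps being linear and continuous, they commute with $\ud^k$, and the sheaf property $R_U(\vec\psi)|_W=R_W(\vec\psi|_W)$ gives $\ud^kR(\tilde\varphi_0)(\tilde\varphi_1,\dotsc,\tilde\varphi_k)|_W=\ud^kR(\vec\varphi_0')(\vec\varphi_1',\dotsc,\vec\varphi_k')|_W$, so the two values of $\norm{\cdot}_{K,m}$ agree; moreover $\norm{\tilde\varphi_i}_{K,c;L,l}=\norm{\vec\varphi_i'}_{K,c;L,l}$ and $\norm{\tilde\varphi_i}_{K,c;B}=\norm{\vec\varphi_i'}_{K,c;B}$, since these seminorms see the outer variable only on $K\subseteq W$, where $\tilde\varphi_i=\vec\varphi_i'$. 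This is exactly the simplified estimate.

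\emph{From the simplified form to \Cref{def_mod}/\Cref{def_negl}.} Keep the same $U$ and fix arbitrary $K,L\csub U$ and $m,k$. Choose a compact $L'$ with $K\cup L\csub L'\csub U$; then $K\csub L'$, so the simplified condition applies to $(K,L')$ and yields $c,l,\lambda$ together with a bounded $B\subseteq C^\infty(U)$. Any $\vec\varphi_i\in C^\infty(U,\cD_L(U))$ becomes, after restricting its domain to a neighbourhood of $K$ and using $\cD_L(U)\subseteq\cD_{L'}(U)$, an element of $C^\infty(K,\cD_{L'}(U))$, and $\norm{\ud^kR(\vec\varphi_0)(\dotsc)}_{K,m}$ has the same value whether read off $R_U$ or off the direct limit, once again by the sheaf property. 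Since $\vec\varphi_i(x)$ is supported in $L$ for every $x$, the seminorm $\norm{\vec\varphi_i}_{K,c;L',l}$ coincides with $\norm{\vec\varphi_i}_{K,c;L,l}$. Finally, to turn $B\subseteq C^\infty(U)$ into a bounded subset of $C^\infty(\Omega)$, fix $\chi_0\in\cD(U)$ with $\chi_0\equiv1$ on a neighbourhood of $L'$ and put $\tilde B\coleq\{\chi_0 f:f\in B\}$; this is bounded in $\cD_{\supp\chi_0}(\Omega)$, hence in $C^\infty(\Omega)$, and because $\pd_x^\alpha\vec\varphi_i(x)$ and $\pd_x^\alpha(\vec\varphi_0(x)-\delta_x)$ are, for $x\in K$, supported in $L'$ (here $K\subseteq L'$ takes care of the $\delta_x$-term), one has $\langle f,\cdot\rangle=\langle\chi_0 f,\cdot\rangle$ on all of them, so $\norm{\cdot}_{K,c;B}=\norm{\cdot}_{K,c;\tilde B}$. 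Substituting these identities into the simplified estimate gives the definitional estimate with data $c,l,\lambda,\tilde B$ that depend only on $K,L,m,k$.

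The step I expect to be the real obstacle is the localisation bookkeeping in the first implication: one has to verify that multiplication by a cutoff genuinely lands in $C^\infty(U,\cD_L(U))$, that the direct limit $C^\infty(K,\cD_L(U))$ and the evaluation of $\ud^kR$ on its elements are well defined, and that neither side of the estimate is perturbed by the modification. All three hinge on the single observation that $\ud^kR(\vec\varphi_0)(\vec\varphi_1,\dotsc,\vec\varphi_k)|_K$ and the seminorms $\norm{\vec\varphi_i}_{K,c;L,l}$ and $\norm{\vec\varphi_i}_{K,c;B}$ depend on the $\vec\varphi_i$ only through their germs along $K$ — which is in turn exactly the sheaf/locality property built into the definition of $\cB(\Omega)$. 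Once that is pinned down, both implications are routine.
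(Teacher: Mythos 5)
Your proof is correct and follows essentially the same route as the paper: the substantive direction (from the simplified condition back to the definitions) uses the identical device of enlarging $L$ to an $L'$ with $K\cup L\csub L'\csub U$, the observation that $\norm{\cdot}_{K,c;L',l}=\norm{\cdot}_{K,c;L,l}$ for kernels valued in $\cD_L$, and the same cutoff trick to turn a bounded $B\subseteq C^\infty(U)$ into a bounded subset of $C^\infty(\Omega)$. The only difference is that you spell out in detail (via the cutoff $\chi$ near $K$ and the locality of $R$ and of the seminorms) the converse direction, which the paper dismisses as obvious.
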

\begin{proof}
Obviously each of these conditions is weaker than the corresponding one of \Cref{def_mod} or \Cref{def_negl}.

Suppose we are given $R \in \cB(\Omega)$ such that the condition stated for moderateness holds. Given $x \in \Omega$ there hence exists some $U \in \cU_x(\Omega)$. Now given arbitrary $K, L \csub U$ we choose a set $L' \csub U$ such that $K \cup L \csub L'$. Fixing $m, k \in \bN_0$ for the moderateness test, for $(K,L')$ we hence obtain $c,l \in \bN_0$ and $\lambda \in \poly_k$. Now fix some $\vec\varphi_0, \dotsc, \vec\varphi_k \in C^\infty(U, \cD_L(U))$; each of those represents an element of $C^\infty(K, \cD_{L'}(U))$, whence we have the estimate
\begin{align*}
 \norm{ \ud^k R(\vec\varphi_0)(\vec\varphi_1, \dotsc, \vec\varphi_k)}_{K,m} & \le \lambda ( \norm{\vec\varphi_0}_{K,c; L', l}, \dotsc, \norm{\vec\varphi_k}_{K,c; L', l}) \\
& = \lambda ( \norm{\vec\varphi_0}_{K,c; L, l}, \dotsc, \norm{\vec\varphi_k}_{K,c; L, l}).
\end{align*}
where the last equality follows because the $\vec\varphi_0,\dotsc,\vec\varphi_k$ take values in $\cD_L(U)$. This shows that $R$ is moderate.

For the case of negligibility we proceed similarly until we obtain $c,l \in \bN_0$, $\lambda \in \poli_k$ and $B \subseteq C^\infty(U)$. Let $\chi \in \cD(U)$ be such that $\chi \equiv 1$ on a neighborhood of $L'$ and set $B' \coleq \{ \chi f\ |\ f \in B \} \subseteq C^\infty(\Omega)$, which is bounded. For any $\vec\varphi_0, \dotsc, \vec\varphi_k$ we then obtain
\begin{gather*}
 \norm{ \ud^k R(\vec\varphi_0)(\vec\varphi_1, \dotsc, \vec\varphi_k)}_{K,m} \le \\
\le \lambda ( \norm{\vec\varphi_0}_{K,c; L', l}, \dotsc, \norm{\vec\varphi_k}_{K,c; L', l}, \norm{ \vec\varphi_0 - \vec\delta}_{K, c; B}, \norm{ \vec\varphi_1}_{K, c; B}, \dotsc, \norm{ \vec\varphi_k}_{K, c; B}) \\
= \lambda ( \norm{\vec\varphi_0}_{K,c; L, l}, \dotsc, \norm{\vec\varphi_k}_{K,c; L, l}, \norm{ \vec\varphi_0 - \vec\delta}_{K, c; B'}, \norm{ \vec\varphi_1}_{K, c; B'}, \dotsc, \norm{ \vec\varphi_k}_{K, c; B'})  
\end{gather*}
which proves negligibility of $R$.
\end{proof}

If the test of \Cref{def_mod}, \Cref{def_negl} or \Cref{prop_simpl} holds on some $U$ then clearly it also holds on any open subset of $U$. The following characterization of moderateness and negligiblity is obtained by applying polarization identities to the differentials of $R$:

\begin{lemma}\label{modchar}Let $R \in \cB(\Omega)$.
 \begin{enumerate}[label=(\roman*)]
  \item \label{modchar.1} $R$ is moderate if and only if
\begin{gather*}
(\forall x \in \Omega)\ (\exists U \in \cU_x(\Omega))\ (\forall K,L \csub U)\ (\forall m,k \in \bN_0)\\
(\exists c,l \in \bN_0)\ (\exists \lambda \in \poly_{\min(1,k)}) \ (\forall \vec\varphi, \vec\psi \in C^\infty(U, \cD_L(U))):\\
\norm{ \ud^k R(\vec\varphi)(\vec\psi,\dotsc,\vec\psi)}_{K, m} \le \left\{
\begin{aligned}
& \lambda( \norm{\vec\varphi}_{K,c; L, l} ) & & \textrm{if }k = 0, \\
& \lambda( \norm{\vec\varphi}_{K,c; L, l}, \norm{\vec\psi}_{K,c; L, l} ) & & \textrm{if }k \ge 1.
\end{aligned}
\right.
\end{gather*}
 \item \label{modchar.2} $R$ is negligible if and only if
\begin{gather*}
(\forall x \in \Omega)\ (\exists U \in \cU_x(\Omega))\ (\forall K,L \csub U)\ (\forall m,k \in \bN_0)\ (\exists c,l \in \bN_0)\\
(\exists \lambda \in \poli_{\min(1,k)})\ (\exists B \subseteq C^\infty(\Omega)\ \textrm{bounded})\ (\forall \vec\varphi, \vec\psi \in C^\infty(U, \cD_L(U))):\\
\norm{\ud^kR(\vec\varphi)(\vec\psi,\dotsc,\vec\psi)}_{K, m} \\
\le \left\{
\begin{aligned}
& \lambda( \norm{\vec\varphi}_{K,c; L, l}, \norm{ \vec\varphi - \vec\delta}_{K, c; B} ) & & \textrm{if }k = 0, \\
& \lambda( \norm{\vec\varphi}_{K,c; L, l}, \norm{\vec\psi}_{K,c; L, l}, \norm{ \vec\varphi - \vec\delta}_{K, c; B}, \norm{ \vec\psi}_{K, c; B}  ) & & \textrm{if }k \ge 1.
\end{aligned}
\right.
\end{gather*}
\end{enumerate}
\end{lemma}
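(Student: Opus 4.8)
The plan is to derive both equivalences from the polarization identity for symmetric multilinear maps; I will carry out the argument for moderateness in detail, negligibility being identical once the additional $\norm{\cdot}_{K,c;B}$ arguments are carried along.

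The implications from \Cref{def_mod} to part~\ref{modchar.1}, and from \Cref{def_negl} to part~\ref{modchar.2}, are mere specializations. Having fixed $x \in \Omega$, the neighborhood $U$, sets $K,L \csub U$ and orders $m,k$, I would take the $c,l$ and $\lambda \in \poly_k$ (resp.\ $\lambda \in \poli_k$) furnished by the respective definition and evaluate its estimate at $\vec\varphi_0 = \vec\varphi$ and $\vec\varphi_1 = \dots = \vec\varphi_k = \vec\psi$; for $k=0$ this is already the asserted inequality, since $\poly_0 = \poly_{\min(1,0)}$ and $\poli_0 = \poli_{\min(1,0)}$. For $k \ge 1$ one replaces $\lambda$ by $\mu(y_0,y_1) \coleq \lambda(y_0,y_1,\dots,y_1)$ in the moderate case and by $\mu(y_0,y_1,z_0,z_1) \coleq \lambda(y_0,y_1,\dots,y_1,z_0,z_1,\dots,z_1)$ in the negligible case; then $\mu \in \poly_1 = \poly_{\min(1,k)}$, and in the negligible case even $\mu \in \poli_1$, because $\mu(y_0,y_1,0,0) = \lambda(y_0,y_1,\dots,y_1,0,\dots,0) = 0$ by the defining property of $\poli_k$.

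For the converse implications I would fix $x$, $U$, $K,L \csub U$ and $m,k$, and take $c,l,\lambda$ — and, in the negligible case, a bounded $B \subseteq C^\infty(\Omega)$ — supplied by part~\ref{modchar.1} resp.~\ref{modchar.2}. If $k=0$ the two conditions agree verbatim, so assume $k \ge 1$. By \cite{KM}, for each $\vec\varphi_0 \in C^\infty(U,\cD(\Omega))$ the differential $\ud^k R(\vec\varphi_0)\colon C^\infty(U,\cD(\Omega))^k \to C^\infty(U)$ is a bounded symmetric $k$-linear map, hence the polarization identity
\[ \ud^k R(\vec\varphi_0)(\vec\varphi_1,\dots,\vec\varphi_k) = \frac{1}{k!}\sum_{\emptyset \ne S \subseteq \{1,\dots,k\}} (-1)^{k-\abso{S}}\,\ud^k R(\vec\varphi_0)\bigl({\textstyle\sum_{i\in S}\vec\varphi_i},\dots,{\textstyle\sum_{i\in S}\vec\varphi_i}\bigr) \]
holds for all $\vec\varphi_0,\dots,\vec\varphi_k \in C^\infty(U,\cD_L(U))$, which is a linear subspace of $C^\infty(U,\cD(\Omega))$, so that every partial sum $\sum_{i\in S}\vec\varphi_i$ again belongs to it. Applying the hypothesis to each of the $2^k-1$ summands, bounding $\norm{\sum_{i\in S}\vec\varphi_i}_{K,c;L,l} \le \sum_{i=1}^k \norm{\vec\varphi_i}_{K,c;L,l}$ (and likewise for $\norm{\cdot}_{K,c;B}$) by the triangle inequality for seminorms, and using monotonicity of $\lambda$, one arrives at
\[ \norm{\ud^k R(\vec\varphi_0)(\vec\varphi_1,\dots,\vec\varphi_k)}_{K,m} \le \frac{2^k-1}{k!}\,\lambda\bigl(\norm{\vec\varphi_0}_{K,c;L,l},{\textstyle\sum_{i=1}^k}\norm{\vec\varphi_i}_{K,c;L,l}\bigr) \]
in the moderate case, and the same inequality with $\lambda\bigl(\norm{\vec\varphi_0}_{K,c;L,l},\sum_i\norm{\vec\varphi_i}_{K,c;L,l},\norm{\vec\varphi_0-\vec\delta}_{K,c;B},\sum_i\norm{\vec\varphi_i}_{K,c;B}\bigr)$ on the right in the negligible case. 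It then suffices to put $\widetilde\lambda(y_0,\dots,y_k) \coleq \tfrac{2^k-1}{k!}\,\lambda(y_0,y_1+\dots+y_k) \in \poly_k$ in the moderate case, and $\widetilde\lambda(y_0,\dots,y_k,z_0,\dots,z_k) \coleq \tfrac{2^k-1}{k!}\,\lambda(y_0,y_1+\dots+y_k,z_0,z_1+\dots+z_k)$ in the negligible case — the latter lies in $\poli_k$ because it vanishes for $z_0 = \dots = z_k = 0$, as $\lambda \in \poli_1$ does. With the same $U,c,l$ (and $B$), $\widetilde\lambda$ converts the bound just obtained into the estimate required by \Cref{def_mod} (resp.\ \Cref{def_negl}).

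I do not anticipate a substantial obstacle: the proof is polarization together with the elementary monotonicity of polynomials with nonnegative coefficients. The only points needing care are the bookkeeping that keeps the rescaled, variable-substituted polynomials inside $\poly_k$ resp.\ $\poli_k$ — in particular checking that the vanishing condition defining $\poli_k$ is preserved under the substitution — and the trivial but genuinely separate treatment of $k=0$, which is precisely why the statement carries the index $\min(1,k)$ and a case distinction.
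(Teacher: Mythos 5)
Your proof is correct and follows essentially the same route as the paper's: specialization plus the substitution $y_1=\dots=y_k$ (resp.\ $z_1=\dots=z_k$) for the forward direction, and the polarization identity for symmetric $k$-linear maps combined with monotonicity of polynomials with coefficients in $\bR^+$ for the converse. The only cosmetic differences are that you absorb the sum over the $2^k-1$ subsets into a single coarser polynomial $\tfrac{2^k-1}{k!}\lambda(y_0,y_1+\dots+y_k)$ where the paper keeps $\tfrac{1}{k!}\sum_{J}\lambda(y_0,\sum_{i\in J}y_i)$, and that your normalization $1/k!$ in the polarization identity is the standard one (the paper's printed $1/n!$ appears to be a typo); neither affects the argument.
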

\begin{proof}We assume $k \ge 1$, as for $k=0$ the statements are identical.

\ref{modchar.1} ``$\Rightarrow$'': One obtains $\lambda \in \poly_k$ such that
\begin{align*}
 \norm{\ud^k R(\vec\varphi)(\vec \psi, \dotsc, \vec \psi)}_{K,m} & \le \lambda ( \norm{\vec\varphi}_{K,c; L, l}, \norm{\vec\psi}_{K,c; L, l}, \dotsc, \norm{\vec\psi}_{K,c; L ,l } ) \\
& = \lambda' ( \norm{\vec\varphi}_{K,c;L,l}, \norm{\vec\psi}_{K,c;L, l})
\end{align*}
with $\lambda' \in \poly_1$ given by $\lambda'(y_0, y_1) = \lambda(y_0, y_1, \dotsc, y_1)$.

``$\Leftarrow$'': One obtains $\lambda \in \poly_1$. We then use the polarization identity \cite[eq.\ (7), p.~471]{zbMATH06298343}
\[
 \ud^k R (\vec\varphi_0)(\vec\varphi_1, \dotsc, \vec\varphi_k) = \frac{1}{n!} \sum_{a=1}^k (-1)^{k-a} \sum_{\substack{J \subseteq \{1 \dotsc k\}\\\abso{J} = a}} \Delta^*(\ud^k R(\vec\varphi_0))(S_J)
\]
where $S_J \coleq \sum_{i \in J} \vec\varphi_i$ and we have set $\Delta^*(\ud^k R ( \vec\varphi_0))(\vec\psi) = \ud^k R (\vec\varphi_0)(\vec\psi, \dotsc, \vec\psi)$. Hence,
\begin{gather*}
 \norm{\ud^k R(\vec\varphi_0)(\vec\varphi_1, \dotsc, \vec\varphi_k)}_{K,m} \le \frac{1}{n!} \sum_{a=1}^k \sum_{\abso{J} = a} \norm{\Delta^*(\ud^k R(\vec\varphi_0))(S_J)}_{K,m} \\
 \le \frac{1}{n!} \sum_{a=1}^k \sum_{\abso{J} = a} \lambda ( \norm{\vec\varphi_0}_{K,c; L, l}, \norm{S_J}_{K,c;L,l}) \\
\le \frac{1}{n!} \sum_{a=1}^k \sum_{\abso{J} = a} \lambda ( \norm{\vec\varphi_0}_{K,c;L,l}, \sum_{i \in J}\norm{\vec\varphi_i}_{K,c;L,l}) \\
= \lambda' ( \norm{\vec\varphi_0}_{K,c;L,l}, \dotsc, \norm{\vec\varphi_k}_{K,c;L,l})
\end{gather*}
with $\lambda' \in \poly_k$ given by
\[ \lambda'(y_0, \dotsc, y_k) = \frac{1}{n!} \sum_{a=1}^k \sum_{\abso{J} = a} \lambda ( y_0, \sum_{i \in J} y_i ). \]

\ref{modchar.2} ``$\Rightarrow$'': We have $\lambda \in \poli_k$ such that
\begin{gather*}
 \norm{\ud^k R(\vec\varphi)(\vec\psi, \dotsc, \vec \psi)}_{K,m} \le \lambda ( \norm{\vec\varphi}_{K,c; L, l}, \norm{\vec\psi}_{K,c;L,l}, \dotsc, \norm{\vec\psi}_{K,c;L,l},\\
\norm{\vec\varphi - \vec\delta}_{K,c;B},\norm{\vec\psi}_{K,c;B}, \dotsc, \norm{\vec\psi}_{K,c;B}) \\
= \lambda' ( \norm{\vec\varphi}_{K,c;L;l}, \norm{\vec\psi}_{K,c;L,l}, \norm{\vec\varphi - \vec\delta}_{K,c;B}, \norm{\vec\psi}_{K,c;B})
\end{gather*}
with $\lambda' \in \poli_k$ given by
\[ \lambda' ( y_0, y_1, z_0, z_1) = \lambda(y_0, y_1, \dotsc, y_1, z_0, z_1, \dotsc, z_1). \]

``$\Leftarrow$'': We obtain $\lambda \in \poli_1$ such that, as above,
\begin{gather*}
 \norm{\ud^k R(\vec\varphi_0)(\vec\varphi_1, \dotsc, \vec\varphi_k)}_{K,m} \\
\le \frac{1}{n!} \sum_{a=1}^k \sum_{\abso{J}=a} \lambda ( \norm{\vec\varphi_0}_{K,c;L,l}, \norm{S_J}_{K,c;L,l}, \norm{\vec\varphi_0 - \vec\delta}_{K,c; B}, \norm{S_J}_{K,c; B}) \\
\le \frac{1}{n!} \sum_{a=1}^k \sum_{\abso{J}=a} \lambda ( \norm{\vec\varphi_0}_{K,c;L,l}, \sum_{i \in J} \norm{\vec\varphi_i}_{K,c;L,l}, \norm{\vec\varphi_0 - \vec\delta}_{K,c;B}, \sum_{i \in J}\norm{\vec\varphi_i}_{K,c;B}) \\
= \lambda' ( \norm{\vec\varphi_0}_{K,c;L,l}, \dotsc, \norm{\vec\varphi_k}_{K,c;L,l}, \norm{\vec\varphi_0 - \vec\delta}_{K,c;B}, \norm{\vec\varphi_1}_{K,c;B}, \dotsc, \norm{\vec\varphi_k}_{K,c;B})
\end{gather*}
with $\lambda' \in \poli_k$ given by
\[ \lambda'(y_0, \dotsc, y_k, z_0, \dotsc, z_k) = \frac{1}{n!} \sum_{a=1}^k \sum_{\abso{J}=a} \lambda(y_0, \sum_{i \in J}y_i, z_0, \sum_{i \in J}z_i). \qedhere \]
\end{proof}
Note that the polarization identities could be applied also in the formulation of \Cref{prop_simpl}.

\begin{proposition}\label{negmod}$\cN(\Omega) \subseteq \cM(\Omega)$.
\end{proposition}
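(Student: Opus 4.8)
The plan is to prove the inclusion directly, by deriving the moderateness test of \Cref{def_mod} from the negligibility test of \Cref{def_negl}. Given $R \in \cN(\Omega)$, I would fix $x \in \Omega$, take the neighborhood $U \in \cU_x(\Omega)$ furnished by \Cref{def_negl}, and show that the same $U$ witnesses moderateness. After fixing $K,L \csub U$ and $m,k \in \bN_0$ and extracting $c,l \in \bN_0$, $\lambda \in \poli_k$ and the bounded set $B \subseteq C^\infty(\Omega)$ from the negligibility test, the goal is to keep $c$ and $l$ unchanged and merely to cook up a $\mu \in \poly_k$ that absorbs the extra $B$-seminorm arguments appearing in \Cref{def_negl} but not in \Cref{def_mod}.

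The key observation is that, on test functions with values in $\cD_L(U)$, the seminorm $\norm{\cdot}_{K,c;B}$ is dominated by $\norm{\cdot}_{K,c;L,l}$ up to an additive constant coming from $\vec\delta$. Concretely, for $\vec\varphi \in C^\infty(U,\cD_L(U))$, $x' \in K$, $\abso\alpha \le c$ and $f \in B$, the function $y \mapsto \pd_{x'}^\alpha\vec\varphi(x')(y)$ is supported in $L$, so $\abso{\langle f(y), \pd_{x'}^\alpha\vec\varphi(x')(y)\rangle} = \abso{\int_L f\,\pd_{x'}^\alpha\vec\varphi(x')} \le \Vol(L)\,\norm{f}_{L,0}\,\norm{\vec\varphi}_{K,c;L,0}$; since $B$ is bounded in $C^\infty(\Omega)$, the constant $A \coleq \Vol(L)\sup_{f\in B}\norm{f}_{L,0}$ is finite and $\norm{\vec\varphi}_{K,c;B} \le A\,\norm{\vec\varphi}_{K,c;L,l}$. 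For the single argument built from $\vec\delta$, I would use that $\langle f(y), \pd_{x'}^\alpha\vec\delta(x')(y)\rangle = \pd_{x'}^\alpha\langle\delta_{x'},f\rangle = (\pd^\alpha f)(x')$, which only adds the finite constant $A' \coleq \sup_{f\in B}\norm{f}_{K,c}$; thus $\norm{\vec\varphi - \vec\delta}_{K,c;B} \le A\,\norm{\vec\varphi}_{K,c;L,l} + A'$.

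Finally I would substitute these bounds into the estimate of \Cref{def_negl}: since $\lambda$ has coefficients in $\bR^+$ it is nondecreasing in each variable on $(\bR^+)^{2(k+1)}$, so, writing $t_i \coleq \norm{\vec\varphi_i}_{K,c;L,l}$, one obtains $\norm{\ud^k R(\vec\varphi_0)(\vec\varphi_1,\dotsc,\vec\varphi_k)}_{K,m} \le \lambda(t_0,\dotsc,t_k, At_0+A', At_1,\dotsc,At_k) = \mu(t_0,\dotsc,t_k)$, where $\mu(y_0,\dotsc,y_k) \coleq \lambda(y_0,\dotsc,y_k, Ay_0+A', Ay_1,\dotsc,Ay_k)$ lies in $\poly_k$ because $A,A' \ge 0$ and the coefficients of $\lambda$ are nonnegative. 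This is exactly the inequality demanded by \Cref{def_mod}, so $R \in \cM(\Omega)$. I do not expect a serious obstacle here: the argument is pure monotonicity and bookkeeping. The only points that need care are the computation of the $\vec\delta$-term under the $B$-pairing and the (harmless) observation that the additive constant $A'$ forces $\mu$ into $\poly_k$ rather than $\poli_k$ — which is fine, since moderateness only asks for a polynomial from $\poly_k$.
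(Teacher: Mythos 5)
Your proof is correct and follows essentially the same route as the paper: both reduce the claim to the two estimates $\norm{\vec\varphi - \vec\delta}_{K,c;B} \le A\,\norm{\vec\varphi}_{K,c;L,l} + A'$ and $\norm{\vec\varphi}_{K,c;B} \le A\,\norm{\vec\varphi}_{K,c;L,l}$ with $A = \abso{L}\sup_{f\in B}\norm{f}_{L,0}$ and $A' = \sup_{f\in B}\norm{f}_{K,c}$, and then substitute these into the nonnegative-coefficient polynomial $\lambda$ to obtain an element of $\poly_k$. No gaps.
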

\begin{proof}
Let $R \in \cN(\Omega)$ and fix $x \in \Omega$ for the moderateness test. By negligibility of $R$ there exists $U \in \cU_x(\Omega)$ as in \Cref{def_negl}. Let $K,L \csub U$ and $m,k \in \bN_0$ be arbitrary. Then there exist $c,l,\lambda$ and $B$ such that the estimate of \Cref{def_negl} holds. We know that $\lambda \in \poli_k$ is given by a finite sum
\[ \lambda(y_0, \dotsc, y_k, z_0, \dotsc, z_k) = \sum_{\alpha, \beta} \lambda_{\alpha\beta} y^\alpha z^\beta. \]
It suffices to show that there are $\lambda_1, \lambda_2 \in \poly_0$ such that for any $\vec\varphi \in C^\infty(U, \cD_L(U))$ we have the estimates
\begin{align}
 \label{douno}\norm{\vec\varphi - \vec\delta}_{K,c; B} & \le \lambda_1 ( \norm{\vec\varphi}_{K,c; L, l} ), \\
 \label{dodue}\norm{\vec\varphi}_{K,c; B} & \le \lambda_2 ( \norm{\vec\varphi}_{K,c; L, l} ).
\end{align}
In fact, these inequalities imply
\begin{align*}
 \norm{\ud^k R (\vec\varphi_0)&(\vec\varphi_1, \dotsc, \vec\varphi_k)}_{K,m} \\
 & \le \sum_{\alpha,\beta} \lambda_{\alpha\beta} \norm{\vec\varphi_0}^{\alpha_0}_{K,c; L, l} \cdot \dotsc \cdot \norm{\vec\varphi_k}^{\alpha_k}_{K,c; L, l} \\
 & \quad \cdot \norm{\vec\varphi_0 - \vec\delta}^{\beta_0}_{K,c; B} \cdot \norm{\vec\varphi_1}^{\beta_1}_{K,c; B} \cdot \dotsc \cdot \norm{\vec\varphi_k}^{\beta_k}_{K,c; B} \\
& \le \sum_{\alpha,\beta} \lambda_{\alpha\beta} \norm{\vec\varphi_0}^{\alpha_0}_{K,c; L, l} \cdot \dotsc \cdot \norm{\vec\varphi_k}^{\alpha_k}_{K,c; L, l} \\
&\quad \cdot \lambda_1 ( \norm{\vec\varphi_0}_{K,c; L, l} ) ^{\beta_0} \cdot \lambda_2 ( \norm{\vec\varphi_1}_{K,c; L, l} )^{\beta_1} \dotsm \lambda_2 ( \norm{\vec\varphi_k}_{K,c; L, l})^{\beta_k} \\
& = \lambda' ( \norm{\vec\varphi_0}_{K,c; L, l}, \dotsc, \norm{\vec\varphi_k}_{K,c; L, l} )
\end{align*}
with $\lambda' \in \poly_k$ given by
\[ \lambda'(y_0, \dotsc, y_k) = \sum \lambda_{\alpha\beta} y^\alpha \lambda_1(y_0)^{\beta_0} \lambda_2(y_1)^{\beta_1} \dotsm \lambda_2(y_k)^{\beta_k}.\]
Inequality \eqref{douno} is seen as follows:
\begin{align*}
\norm{\vec\varphi - \vec\delta}_{K,c; B} &= \sup_{\substack{x \in K, \abso{\alpha} \le c\\f \in B}} \abso{ \int_L f(y)\pd_x^\alpha\vec\varphi(x)(y) \dif y - \pd^\alpha f(x) } \\
& \le \abso{L} \cdot \sup_{f \in B} \norm{f}_{L,0} \cdot \norm{\vec\varphi}_{K,c; L, l} + \sup_{f \in B} \norm{f}_{K,c}\\
& = \lambda_1 ( \norm{\vec\varphi}_{K,c; L, l})
\end{align*}
with $\lambda_1(y_0) = \abso{L} \cdot \sup_{f \in B} \norm{f}_{L,0} \cdot y_0 + \sup_{f \in B} \norm{f}_{K,c}$, where $\abso{L}$ denotes the Lebesgue measure of $L$.
Similarly, inequality \eqref{dodue} results from
\begin{align*}
 \norm{\vec\varphi}_{K,c; B} & = \sup_{\substack{x \in K, \abso{\alpha} \le c\\f \in B}} \abso{ \int_L f(y) \pd_x^\alpha \vec\varphi(x)(y) \dif y } \\
 & \le \abso{L} \cdot \sup_{f \in B} \norm{f}_{L,0} \cdot \norm{\vec\varphi}_{K,c; L, l} \\
 &= \lambda_2 ( \norm{\vec\varphi}_{K,c; L, l})
\end{align*}
with $\lambda_2(y_0) = \abso{L} \cdot \sup_{f \in B} \norm{f}_{L,0} \cdot y_0$.
\end{proof}

\begin{proposition}\label{11}$\cM(\Omega)$ is a subalgebra of $\cB(\Omega)$ and $\cN(\Omega)$ is an ideal in $\cM(\Omega)$.
\end{proposition}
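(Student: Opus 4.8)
The plan is to verify the defining conditions by hand, using the polarized characterizations of \Cref{modchar} (so that only differentials $\ud^kR(\vec\varphi)(\vec\psi,\dotsc,\vec\psi)$ occur) together with the remark, made just after \Cref{def_negl}, that once the test for moderateness or negligibility holds on a neighborhood $U$ of a point it holds on every open subset of $U$; hence for two elements $R,S$ one always works on $U_R\cap U_S$. The recurring bookkeeping device is monotonicity: $\norm{\cdot}_{K,c;L,l}$ is nondecreasing in $c$ and $l$, and polynomials in $\poly_k$ and $\poli_k$ are nondecreasing in each argument, so finitely many estimates with parameters $c_i,l_i,\lambda_i$ can always be merged by replacing all $c_i,l_i$ with their maximum and forming sums and products of the $\lambda_i$ (which stay in $\poly_k$, resp.\ $\poli_k$, since these are semirings).

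\emph{$\cM(\Omega)$ is a subalgebra.} Closure under the vector space operations is immediate: $\ud^k(R+\mu S)(\vec\varphi)(\vec\psi,\dotsc,\vec\psi)=\ud^kR(\vec\varphi)(\vec\psi,\dotsc,\vec\psi)+\mu\ud^kS(\vec\varphi)(\vec\psi,\dotsc,\vec\psi)$ for $\mu\in\bC$, so the triangle inequality and the merging device give a bound by $\abso{\mu}\lambda_S+\lambda_R$. For products one invokes the iterated Leibniz rule for the bounded bilinear (hence, in convenient calculus, smooth) pointwise multiplication $C^\infty(U)\times C^\infty(U)\to C^\infty(U)$, namely $\ud^k(R\cdot S)(\vec\varphi)(\vec\psi,\dotsc,\vec\psi)=\sum_{j=0}^k\binom{k}{j}\ud^jR(\vec\varphi)(\vec\psi,\dotsc,\vec\psi)\cdot\ud^{k-j}S(\vec\varphi)(\vec\psi,\dotsc,\vec\psi)$, together with $\norm{fg}_{K,m}\le C_m\norm{f}_{K,m}\norm{g}_{K,m}$ (from the Leibniz rule for $\pd^\alpha$, $C_m$ depending only on $m$ and $n$). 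Bounding the $j$-th summand by $C_m$ times the product of the estimate of \Cref{modchar}\ref{modchar.1} for $R$ at order $j$ and the one for $S$ at order $k-j$, then summing over $j$, produces a single polynomial in $\poly_{\min(1,k)}$ in the seminorms $\norm{\vec\varphi}_{K,c;L,l}$ and (for $k\ge1$) $\norm{\vec\psi}_{K,c;L,l}$ --- exactly the bound \Cref{modchar}\ref{modchar.1} requires of $R\cdot S$.

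\emph{$\cN(\Omega)$ is an ideal in $\cM(\Omega)$.} We have $\cN(\Omega)\subseteq\cM(\Omega)$ by \Cref{negmod}, and closure of $\cN(\Omega)$ under addition and negation is proved as for $\cM(\Omega)$, now also using that a finite union of bounded subsets of $C^\infty(\Omega)$ is bounded. For absorption take $R\in\cN(\Omega)$, $S\in\cM(\Omega)$ and apply the same Leibniz expansion on $U_R\cap U_S$. The $j$-th summand is bounded by $C_m$ times the product of the estimate of \Cref{modchar}\ref{modchar.2} for $R$ at order $j$ --- a polynomial in $\poli_{\min(1,k)}$, each of whose monomials therefore carries at least one of the ``$z$-variables'' $\norm{\vec\varphi-\vec\delta}_{K,c;B}$, $\norm{\vec\psi}_{K,c;B}$ --- and the estimate of \Cref{modchar}\ref{modchar.1} for $S$ at order $k-j$, a polynomial in the ``$y$-variables'' $\norm{\vec\varphi}_{K,c;L,l}$, $\norm{\vec\psi}_{K,c;L,l}$ alone; here one first unifies $c,l$ over all $j$ and replaces the several bounded sets arising from $R$'s tests by their (bounded) union $B$. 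Since $\poli_k$ is an ideal in $\poly_k$ when the latter is regarded as a subsemiring of $\bR^+[y_0,\dotsc,y_k,z_0,\dotsc,z_k]$ (noted before \Cref{def_mod}), each such product, and hence their sum over $j$, again lies in $\poli_{\min(1,k)}$, which is precisely the estimate \Cref{modchar}\ref{modchar.2} demands of $R\cdot S$; thus $R\cdot S\in\cN(\Omega)$.

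The non-formal content is minimal: the iterated Leibniz rule for the bilinear multiplication on $C^\infty(U)$, the seminorm estimate $\norm{fg}_{K,m}\le C_m\norm{f}_{K,m}\norm{g}_{K,m}$, and --- the real engine of the ideal property --- the elementary algebraic fact, already encoded in the definition of $\poli_k$, that the product of a polynomial all of whose monomials contain a $z$-variable with an arbitrary polynomial in the $y$-variables is again of that form. I expect the only point needing care to be the bookkeeping in the product: the $R$-factor at order $j$ and the $S$-factor at order $k-j$ must be read as polynomials in one and the same finite list of seminorms (after merging the $c,l$ and the sets $B$), so that the sum over $j$ genuinely lies in $\poly_{\min(1,k)}$, respectively $\poli_{\min(1,k)}$.
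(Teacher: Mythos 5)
Your argument is correct and is exactly the direct verification that the paper compresses into ``This is evident from the definitions'': Leibniz expansion of $\ud^k(R\cdot S)$, merging of the finitely many parameters $c,l,\lambda,B$ by monotonicity, and the ideal property of $\poli_k$ in $\poly_k$ for the absorption step. No gaps; the one point you flag as delicate (reading all factors as polynomials in a single unified list of seminorms before summing over $j$) is handled correctly.
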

\begin{proof}
This is evident from the definitions.
\end{proof}

\begin{theorem}\label{T1}Let $u \in \cD'(\Omega)$ and $f \in C^\infty(\Omega)$. Then
\begin{enumerate}[label=(\roman*)]
\item\label{T1.1} $\iota u$ is moderate,
\item\label{T1.2} $\sigma f$ is moderate,
\item\label{T1.3} $\iota f - \sigma f$ is negligible, and
\item\label{T1.4} if $\iota u$ is negligible then $u=0$.
\end{enumerate}
\end{theorem}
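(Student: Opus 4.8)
The plan is to dispose of \ref{T1.1}--\ref{T1.3} by writing down the (few nonzero) differentials of the affine-linear maps involved and reading off suitable polynomials, and to prove \ref{T1.4} by a regularisation argument. Since $\iota u$ is $\bC$-linear in $\vec\varphi$, one has $\ud^0(\iota u)(\vec\varphi_0)(x)=\langle u,\vec\varphi_0(x)\rangle$, $\ud^1(\iota u)(\vec\varphi_0)(\vec\varphi_1)(x)=\langle u,\vec\varphi_1(x)\rangle$ and $\ud^k(\iota u)=0$ for $k\ge 2$. For \ref{T1.1}, fix $x$ and take $U=\Omega$; given $K,L\csub\Omega$, continuity of $u$ furnishes $C>0$ and $l\in\bN_0$ with $\abso{\langle u,\eta\rangle}\le C\norm{\eta}_{L,l}$ for all $\eta\in\cD_L(\Omega)$, and since $x$-derivatives pass through $u$ and $\pd_x^\alpha\vec\varphi_i(x)\in\cD_L(\Omega)$ we get $\norm{\ud^k(\iota u)(\vec\varphi_0)(\vec\varphi_1,\dots,\vec\varphi_k)}_{K,m}\le C\norm{\vec\varphi_{\min(k,1)}}_{K,m;L,l}$; thus $c=m$ and $\lambda=C\,y_{\min(k,1)}\in\poly_k$ (or $\lambda=0$ for $k\ge 2$) work. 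Assertion \ref{T1.2} is immediate because $(\sigma f)(\vec\varphi)(x)=f(x)$ does not depend on $\vec\varphi$: then $\ud^0(\sigma f)(\vec\varphi_0)=f$, $\ud^k(\sigma f)=0$ for $k\ge 1$, and one takes the constant polynomial $\lambda=\norm{f}_{K,m}$ (or $\lambda=0$).

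For \ref{T1.3} set $R=\iota f-\sigma f$, so $R(\vec\varphi)(x)=\langle f,\vec\varphi(x)\rangle-f(x)$, $\ud R(\vec\varphi_0)(\vec\varphi_1)(x)=\langle f,\vec\varphi_1(x)\rangle$ and $\ud^kR=0$ for $k\ge 2$. Fixing $x$, take $U=\Omega$; for given $K,L$ choose $\chi\in\cD(\Omega)$ with $\chi\equiv 1$ on a neighbourhood of $K\cup L$ and put $B\coleq\{\chi f\}$, which is bounded. Since $\supp\vec\varphi_i(x)\subseteq L$ and $\chi\equiv 1$ near $K$ and near $L$, one computes $\pd_x^\alpha R(\vec\varphi_0)(x)=\langle\chi f,\pd_x^\alpha\vec\varphi_0(x)\rangle-\pd^\alpha(\chi f)(x)$ and $\pd_x^\alpha\ud R(\vec\varphi_0)(\vec\varphi_1)(x)=\langle\chi f,\pd_x^\alpha\vec\varphi_1(x)\rangle$, hence directly from the definition of $\norm{\cdot}_{K,m;B}$ (and $\langle g,\pd_x^\alpha\vec\delta(x)\rangle=\pd^\alpha g(x)$) that $\norm{R(\vec\varphi_0)}_{K,m}=\norm{\vec\varphi_0-\vec\delta}_{K,m;B}$ and $\norm{\ud R(\vec\varphi_0)(\vec\varphi_1)}_{K,m}=\norm{\vec\varphi_1}_{K,m;B}$. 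Thus $c=m$, $l=0$ and $\lambda=z_0$ (for $k=0$), $\lambda=z_1$ (for $k=1$), $\lambda=0$ (for $k\ge 2$), all in $\poli_k$, establish negligibility.

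The real work is \ref{T1.4}. Assume $\iota u$ negligible and fix $x_0\in\Omega$; it suffices to show $u$ vanishes near $x_0$. Take $U\in\cU_{x_0}(\Omega)$ as in \Cref{prop_simpl}, pick $r>0$ with $\overline{B_{2r}(x_0)}\csub U$, set $K\coleq\overline{B_r(x_0)}$ and $L\coleq\overline{B_{2r}(x_0)}$ (so $K\csub L\csub U$), and run the negligibility test of \Cref{prop_simpl} with $m=k=0$ to obtain $c,l\in\bN_0$, $\lambda\in\poli_0$ and a bounded $B\subseteq C^\infty(U)$. As every monomial of $\lambda\in\poli_0$ carries a factor $z_0$, there are $C>0$, $N\in\bN_0$ with $\lambda(y_0,z_0)\le Cz_0(1+y_0+z_0)^N$. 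Now choose $q\in\bN$ with $q>N(n+c+l)$ and $\rho\in\cD(B_1(0))$ with $\int\rho=1$ and $\int z^\gamma\rho(z)\dif z=0$ for $1\le\abso\gamma\le q-1$, put $\rho_\e(z)=\e^{-n}\rho(z/\e)$ and $\vec\varphi_\e(x)(y)\coleq\rho_\e(x-y)$; for $\e$ below a threshold depending on $r$ and $\supp\rho$, $\vec\varphi_\e$ represents an element of $C^\infty(K,\cD_L(U))$. Then $\norm{(\iota u)(\vec\varphi_\e)}_{K,0}=\sup_{x\in K}\abso{(u*\rho_\e)(x)}$; differentiating $\rho_\e$ gives $\norm{\vec\varphi_\e}_{K,c;L,l}\le C_1\e^{-(n+c+l)}$; and from $\langle g,\pd_x^\alpha\vec\varphi_\e(x)\rangle-\pd^\alpha g(x)=(\pd^\alpha g*\rho_\e)(x)-\pd^\alpha g(x)$ together with Taylor's formula and the vanishing moments of $\rho$ one gets $\norm{\vec\varphi_\e-\vec\delta}_{K,c;B}\le C_2\e^q$, with $C_2<\infty$ because $B$ is bounded (hence bounded in $\norm{\cdot}_{L',c+q}$ for a compact neighbourhood $L'$ of $K$ in $U$). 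Feeding these into the estimate, $\sup_{x\in K}\abso{(u*\rho_\e)(x)}\le C\,C_2\e^q(1+C_1\e^{-(n+c+l)}+C_2\e^q)^N\le C_3\,\e^{\,q-N(n+c+l)}\to 0$ as $\e\to 0$. Hence, by weak convergence of mollification, $\langle u,\psi\rangle=\lim_{\e\to 0}\int(u*\rho_\e)(x)\psi(x)\dif x=0$ for every $\psi\in\cD(\Omega)$ with $\supp\psi\subseteq B_r(x_0)$; so $u$ vanishes on $B_r(x_0)$, and as $x_0$ was arbitrary, $u=0$.

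The only genuine difficulty lies in \ref{T1.4}: for any family $\vec\varphi_\e$ approximating $\vec\delta$ the quantity $\norm{\vec\varphi_\e}_{K,c;L,l}$ necessarily blows up like $\e^{-(n+c+l)}$, and the polynomial $\lambda$ raises this to the $N$-th power, so a naive mollifier loses. The resolution is that $c$, $l$ and $N$ are all fixed before $\rho$ is chosen, so one may kill sufficiently many moments of $\rho$ to push $\norm{\vec\varphi_\e-\vec\delta}_{K,c;B}$ down to order $\e^q$ with $q$ large enough to beat $\e^{-N(n+c+l)}$. The remaining ingredients are standard but should be checked: that $x\mapsto\langle u,\vec\varphi(x)\rangle$ is smooth with $x$-derivatives commuting with $u$ (already invoked when $\iota$ was defined), that $\vec\varphi_\e$ really lands in $C^\infty(K,\cD_L(U))$ once $\e$ is small, and that $\check\rho_\e*\psi\to\psi$ in $\cD(\Omega)$.
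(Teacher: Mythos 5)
Your proposal is correct and follows essentially the same route as the paper: parts \ref{T1.1}--\ref{T1.3} by writing out the at most two nonzero differentials of the affine maps and reading off the polynomials, and part \ref{T1.4} by feeding a mollifier with sufficiently many vanishing moments (chosen after $c,l$ and the degree of $\lambda$ are fixed, so that the $\e^{q}$ gain from $\norm{\vec\varphi_\e-\vec\delta}_{K,c;B}$ beats the $\e^{-N(n+c+l)}$ loss from $\norm{\vec\varphi_\e}_{K,c;L,l}$) into the negligibility estimate of \Cref{prop_simpl}. The only deviations are cosmetic: the paper takes $B=\{f\}$ rather than $\{\chi f\}$ in \ref{T1.3}, and concludes \ref{T1.4} by comparing the two limits of $\langle u,\vec\varphi_\e\rangle$ in $\cD'(V)$ rather than pairing $u*\rho_\e$ against test functions.
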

\begin{proof}
\ref{T1.1}: Fix $x$ for the moderateness test and let $U \in \cU_x(\Omega)$ be arbitrary. Fix any $K,L \csub U$ and $m \in \bN_0$. Then there are constants $C = C(L) \in \bR^+$ and $l = l(L) \in \bN_0$ such that $\abso{\langle u, \varphi\rangle} \le C \norm {\varphi}_{L,l}$ for all $\varphi \in \cD_L(\Omega)$. Hence, we see that
\begin{gather*}
 \norm{(\iota u)(\vec\varphi_0)}_{K,m} = \norm { \langle u, \vec\varphi_0 \rangle }_{K,m} = \sup_{x \in K, \abso{\alpha} \le m} \abso{ \langle u, \pd_x^\alpha \vec\varphi_0(x) \rangle} \\
\le C \cdot \sup_{\substack{x \in K, \abso{\alpha} \le m\\y \in L, \abso{\beta} \le l}} \abso{ \pd_x^\alpha\pd_y^\beta \vec\varphi_0(x)(y)} = C \norm{\vec\varphi_0}_{K,m; L, l} = \lambda ( \norm{\vec\varphi_0}_{K,m; L, l}).
\end{gather*}
with $\lambda ( y_0 ) = C y_0$. Moreover, we have
\[ \norm{ \ud ( \iota u )(\vec\varphi_0)(\vec\varphi_1)}_{K,m} \le C \norm{\vec\varphi_1}_{K,m; L, l} = \lambda ( \norm{\vec\varphi_0}_{K,m; L, l}, \norm{\vec\varphi_1}_{K,m; L, l}) \]
with $\lambda(y_0, y_1) = C y_1$. Higher differentials of $\iota u$ vanish and the moderateness test is satisfied with $\lambda = 0$ for $k \ge 2$.

\ref{T1.2}: Fix $x$ and let $U \in \cU_x(\Omega)$ be arbitrary. For any $K,L \csub U$ and $m \in \bN_0$ we have
 \[ \norm{ ( \sigma f)(\vec\varphi_0) }_{K,m} = \norm{f}_{K, m} = \lambda(\norm{\vec\varphi_0}_{K, 0; L, 0}) \]
with $\lambda(y_0) = \norm{f}_{K,m}$. Differentials of $\sigma f$ vanish, i.e., $\lambda = 0$ for $k\ge 1$.

\ref{T1.3}: Fix $x$ and let $U \in \cU_x(\Omega)$ be arbitrary. For any $K,L \csub U$ and $m,k \in \bN_0$ we have
\begin{align*}
 (\iota f - \sigma f)(\vec\varphi_0) &= \langle f, \vec\varphi_0 - \vec\delta \rangle, \\
 \ud ( \iota f - \sigma f)(\vec\varphi_0)(\vec\varphi_1) &= \langle f, \vec\varphi_1 \rangle, \\
 \ud^k ( \iota f - \sigma f)(\vec\varphi_0)(\vec\varphi_1, \dotsc, \vec\varphi_k) &= 0 \quad \textrm{ for }k\ge 2.
\end{align*}
Hence, with $c = m$, $l = 0$ and $B = \{ f \}$ the negligibility test is satisfied with $\lambda(y_0, z_0) = z_0$ for $k=0$, $\lambda(y_0, y_1, z_0, z_1) = z_1$ for $k=1$ and $\lambda=0$ for $k \ge 2$.

\ref{T1.4}: We show that every point $x \in \Omega$ has an open neighborhood $V$ such that $u|_V = 0$, which implies $u=0$.

Given $x \in \Omega$, let $U \in \cU_x(\Omega)$ be as in the characterization of negligibility in \Cref{prop_simpl}. Choose an open neighborhood $V$ of $x$ such that $K \coleq \overline{V} \csub U$ and $r>0$ such that $L \coleq \overline{B_r(K)} \csub U$. With $k=m=0$, \Cref{prop_simpl} gives $c,l \in \bN_0$, $\lambda \in \poli_0$ and $B \subseteq C^\infty(U)$, where $\lambda$ has the form
\[ \lambda(y,z) = \sum_{\alpha \in \bN_0^n, \beta \in \bN} \lambda_{\alpha\beta} y^\alpha z^\beta. \]
Choose $\varphi \in \cD(\bR^n)$ with $\supp \varphi \subseteq B_1(0)$, $\int \varphi(x) \,\ud x = 1$ and $\int x^\gamma \varphi(x) \,\ud x = 0$ for $\gamma \in \bN_0^n$ with $0 < \abso{\gamma} \le q$, where $q$ is chosen such that $\beta(q+1) > \alpha(n+c+l)$ for all $\alpha,\beta$ with $\lambda_{\alpha\beta} \ne 0$ (e.g., take $q = (n+c+l) \deg_y \lambda$, where $\deg_y \lambda$ is the degree of $\lambda$ with respect to $y$). For $\e>0$ set $\varphi_\e(y) = \e^{-n} \varphi(y/\e)$. Then for $\e<r$, $\vec\varphi_\e(x)(y) \coleq \varphi_\e(y-x)$ defines an element $\vec\varphi_\e \in C^\infty(K, \cD_L(\Omega))$ because $\supp \varphi_\e(.-x) = x + \supp \varphi_\e \subseteq B_\e(x) \subseteq B_r(K) \subseteq L$ for $x \in B_{r-\e}(K)$. Consequently, we have
\[ \norm{(\iota u)(\vec\varphi_\e)}_{K,0} \le \lambda ( \norm{\vec\varphi_\e}_{K,c; L, l}, \norm{\vec\varphi_\e - \vec\delta}_{K,c; B}). \]
Because of the estimates
\begin{align*}
\norm{\vec\varphi_\e}_{K,c; L, l} & = O(\e^{-(n+l+c)}) \\
\norm{\vec\varphi_\e - \vec\delta}_{K,c;B} & = O(\e^{q+1}),
\end{align*}
which may be verified by a direct calculation, we have
\[ \norm{(\iota u)(\vec\varphi_\e)}_{K,0} \le \sum_{\alpha, \beta} \lambda_{\alpha, \beta} \cdot O(\e^{-\alpha(n+c+l)}) \cdot O(\e^{\beta(q+1)}) \to 0 \]
by the choice of $q$, which means that $(\iota u)(\vec\varphi_\e)|_V \to 0$ in $C(V)$ and hence also in $\cD'(V)$. On the other hand, we have
\[ \langle u, \vec\varphi_\e \rangle|_V \to u|_V \]
in $\cD'(V)$, as is easily verified. This completes the proof.
\end{proof}

\begin{theorem}For $X \in C^\infty(\Omega, \bR^n)$ we have
 \begin{enumerate}[label=(\roman*)]
  \item \label{T4} $\widetilde \D_X ( \cM(\Omega)) \subseteq \cM(\Omega)$ and $\widehat \D_X ( \cM(\Omega)) \subseteq \cM(\Omega)$,
  \item \label{T5} $\widetilde \D_X ( \cN(\Omega)) \subseteq \cN(\Omega)$ and $\widehat \D_X ( \cN(\Omega)) \subseteq \cN(\Omega)$.
 \end{enumerate}
\end{theorem}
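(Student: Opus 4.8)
The plan is to reduce both inclusions to the moderateness (resp.\ negligibility) of $R$ by writing the iterated differentials of $\widetilde\D_X R$ and $\widehat\D_X R$ in terms of those of $R$ and then controlling the auxiliary test fields produced thereby. Since $\D_X\colon C^\infty(U)\to C^\infty(U)$, $g\mapsto\sum_i X_i\pd_i g$, is linear and continuous, from $\widetilde\D_X R=\D_X\circ R_U$ one gets $\ud^k(\widetilde\D_X R)(\vec\varphi_0)(\vec\varphi_1,\dotsc,\vec\varphi_k)=\D_X\bigl(\ud^k R(\vec\varphi_0)(\vec\varphi_1,\dotsc,\vec\varphi_k)\bigr)$. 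For $\widehat\D_X$ write $\widehat\D_X R=\widetilde\D_X R-S$, where $S(\vec\varphi)\coleq\ud R(\vec\varphi)(\DSK_X\vec\varphi)$. As $\DSK_X$ is a linear continuous operator on $C^\infty(U,\cD(\Omega))$, the map $\vec\varphi\mapsto(\vec\varphi,\DSK_X\vec\varphi)$ is linear and continuous; the chain rule of convenient calculus together with the linearity of $(\vec a,\vec b)\mapsto\ud R(\vec a)(\vec b)$ in $\vec b$ then yields, by induction on $k$,
\begin{gather*}
\ud^k S(\vec\varphi_0)(\vec\varphi_1,\dotsc,\vec\varphi_k)=\ud^{k+1}R(\vec\varphi_0)(\vec\varphi_1,\dotsc,\vec\varphi_k,\DSK_X\vec\varphi_0)\\
{}+\sum_{j=1}^k\ud^k R(\vec\varphi_0)(\vec\varphi_1,\dotsc,\vec\varphi_{j-1},\DSK_X\vec\varphi_j,\vec\varphi_{j+1},\dotsc,\vec\varphi_k).
\end{gather*}

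Next I would establish the continuity estimates needed on the right-hand sides. A routine computation with the Leibniz rule shows that for $K,L\csub U$ and $m,c,l\in\bN_0$ there are constants, depending only on $X$, $K$, $L$ and the orders, such that $\norm{\D_X g}_{K,m}\le C\norm{g}_{K,m+1}$ and $\norm{\DSK_X\vec\psi}_{K,c;L,l}\le C\norm{\vec\psi}_{K,c+1;L,l+1}$; moreover $\DSK_X$ maps $C^\infty(U,\cD_L(U))$ into itself, since both summands of $\DSK_X\vec\psi$ leave the support in the second variable inside $L$. For the kernel seminorms one uses the integration-by-parts identity $\langle f,(\D_X^w\circ\vec\psi)(x)\rangle=-\langle\D_X f,\vec\psi(x)\rangle$, which remains valid when $\vec\psi$ takes values in $\cE'(U)$, to obtain $\norm{\DSK_X\vec\psi}_{K,c;B}\le C\norm{\vec\psi}_{K,c+1;B'}$ with $B'\coleq B\cup\{\D_X f:f\in B\}$, again a bounded subset of $C^\infty(U)$. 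Finally, the decisive identity is $\DSK_X\vec\delta=0$, which holds because $\langle(\D_X\vec\delta)(x),g\rangle=\D_X g(x)=-\langle(\D_X^w\circ\vec\delta)(x),g\rangle$ for every test function $g$.

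With these in hand I would assemble the estimates. For \ref{T4}, fix $x$ and let $U$ be the neighborhood granted by moderateness of $R$. Given $K,L\csub U$ and $m,k\in\bN_0$, apply the moderateness estimate of \Cref{def_mod} for $R$ on $U$ at orders $(m+1,k)$ (for $\widetilde\D_X R$ and the summands of $\ud^k S$) and at $(m,k+1)$ (for the first term of $\ud^k S$), obtaining $c$, $l$ and polynomials $\lambda$; substituting $\norm{\DSK_X\vec\varphi_j}_{K,c;L,l}\le C\norm{\vec\varphi_j}_{K,c+1;L,l+1}$ and, in the first term, the extra variable $y_{k+1}$ by $Cy_0$, each bound becomes a polynomial in $\norm{\vec\varphi_0}_{K,c';L,l'},\dotsc,\norm{\vec\varphi_k}_{K,c';L,l'}$. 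Enlarging $c$, $l$ uniformly and summing yields $\lambda'\in\poly_k$, so $\widetilde\D_X R,\widehat\D_X R\in\cM(\Omega)$. For \ref{T5} the same scheme runs with \Cref{def_negl}; the only delicate point is that $\DSK_X\vec\varphi_0$, being the last argument of $\ud^{k+1}R$, is inserted into the negligibility test in the slot of a variable $z_{k+1}$, and the assembled polynomial must still lie in $\poli_k$, i.e.\ vanish when $z_0=\dotsb=z_k=0$. This is where $\DSK_X\vec\delta=0$ enters: one writes $\DSK_X\vec\varphi_0=\DSK_X(\vec\varphi_0-\vec\delta)$ and applies the estimate above to $\vec\psi=\vec\varphi_0-\vec\delta$, so that $\norm{\DSK_X\vec\varphi_0}_{K,c;B}\le C\norm{\vec\varphi_0-\vec\delta}_{K,c+1;B'}$ is dominated by a multiple of the distinguished variable $z_0$, while the remaining new $z$-variables $\norm{\DSK_X\vec\varphi_j}_{K,c;B}$ ($j\ge1$) are dominated by $C\norm{\vec\varphi_j}_{K,c+1;B'}$, i.e.\ by $z_j$. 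Hence every monomial of the resulting polynomial keeps a positive power of some $z_i$, so the polynomial is in $\poli_k$, and $\widetilde\D_X R,\widehat\D_X R\in\cN(\Omega)$.

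I expect the main obstacle to be exactly this last bookkeeping step for \ref{T5}: one must track, through all the substitutions coming from the formula for $\ud^k S$ and from the estimates on $\DSK_X$, that no monomial free of the $z$-variables is produced — and this hinges entirely on the cancellation $\DSK_X\vec\delta=0$ established in the second step. The computation of $\ud^k S$ and the seminorm estimates for $\D_X$ and $\DSK_X$ are otherwise only routine applications of the Leibniz rule and the convenient-calculus chain rule.
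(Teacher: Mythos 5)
Your proof is correct and takes essentially the same route as the paper: reduce to the moderateness/negligibility of $R$ via the formulas for $\ud^k(\widetilde\D_XR)$ and $\ud^k(\widehat\D_XR)$ and the seminorm estimates $\norm{\D_Xg}_{K,m}\le C\norm{g}_{K,m+1}$ and $\norm{\DSK_X\vec\varphi}_{K,c;L,l}\le C\norm{\vec\varphi}_{K,c+1;L,l+1}$. The one place where you go beyond the paper's (very terse) argument is the negligibility bookkeeping: the paper does not address how the slot occupied by $\DSK_X\vec\varphi_0$ in $\ud^{k+1}R$ is to be matched with a $z$-variable so that the assembled polynomial stays in $\poli_k$, and your observation that $\DSK_X\vec\delta=0$, hence $\norm{\DSK_X\vec\varphi_0}_{K,c;B}\le C\norm{\vec\varphi_0-\vec\delta}_{K,c+1;B'}$, is exactly the missing ingredient that makes that step work. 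This is a genuine point in your favour; the rest of your argument matches the paper's.
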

\begin{proof}
The claims for $\widetilde \D_X$ are clear because
\begin{gather*}
 \norm{\ud^k (\widetilde\D_X R)(\vec\varphi)(\vec \psi, \dotsc, \vec\psi)}_{K,m} = \norm{\D_X(\ud^k R (\vec\varphi)(\vec \psi, \dotsc, \vec\psi))}_{K,m} \\
 \le C \norm{\ud^k R(\vec\varphi)(\vec\psi, \dotsc, \vec \psi)}_{K,m+1}
\end{gather*}
for some constant $C$ depending on $X$. As to $\widehat \D_X$, we have to deal with terms of the form
\[ \ud^{k+1}R(\vec\varphi)(\DSK_X\vec\varphi, \vec\psi, \dotsc, \vec \psi)\quad\textrm{and}\quad \ud^kR(\vec\varphi)(\DSK_X\vec\psi, \vec\psi, \dotsc, \vec\psi) \]
for which we use the estimate
\[ \norm{\DSK_X\vec\varphi}_{K,c; L,l} \le C \norm{\vec\varphi}_{K,c,+1; L, l+1} \]
for some constant $C$ depending on $X$.
\end{proof}

We now come to the quotient algebra.

\begin{definition}We define the Colombeau algebra of generalized functions on $\Omega$ by $\cG(\Omega) \coleq \cM(\Omega) / \cN(\Omega)$.
\end{definition}

$\cG(\Omega)$ is a $C^\infty(\Omega)$-module and an associative commutative algebra with unit $\sigma(1)$. $\iota$ is a linear embedding of $\cD'(\Omega)$ and $\sigma$ an algebra embedding of $C^\infty(\Omega)$ into $\cG(\Omega)$ such that $\iota f = \sigma f$ in $\cG(\Omega)$ for all smooth functions $f \in C^\infty(\Omega)$. Furthermore, the derivatives $\widehat \D_X$ and $\widetilde \D_X$ are well-defined on $\cG(\Omega)$.

Finally, we establish sheaf properties of $\cG$. Note that for $\Omega' \csub \Omega$ open, the restriction $R|_{\Omega'}(\vec\varphi) \coleq R(\vec\varphi)$ is well-defined because for $U \subseteq \Omega'$ open we have $C^\infty(U, \cD(\Omega')) \subseteq C^\infty(U, \cD(\Omega))$.

\begin{proposition}Let $R \in \cB(\Omega)$ and $\Omega' \subseteq \Omega$ be open. If $R$ is moderate then $R|_{\Omega'}$ is moderate; if $R$ is negligible then $R|_{\Omega'}$ is negligible.
\end{proposition}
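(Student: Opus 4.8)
The plan is to exploit that moderateness and negligibility, as formulated in \Cref{def_mod} and \Cref{def_negl}, are genuinely local conditions of the form $(\forall x)\,(\exists U \in \cU_x(\cdot))\,(\dots)$. Hence, to show that $R|_{\Omega'}$ is moderate (resp.\ negligible) it suffices to produce, for each $x \in \Omega'$, one open neighbourhood of $x$ inside $\Omega'$ on which the corresponding test for $R|_{\Omega'}$ holds; this neighbourhood will simply be the intersection with $\Omega'$ of the neighbourhood supplied by the hypothesis on $R$.

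Concretely, fix $x \in \Omega'$. Since $\Omega'$ is open and $\Omega' \subseteq \Omega$, we have $x \in \Omega$, and $\cU_x(\Omega')$ consists precisely of those $W \in \cU_x(\Omega)$ with $W \subseteq \Omega'$. Moderateness (resp.\ negligibility) of $R$ gives $U \in \cU_x(\Omega)$ on which the test of \Cref{def_mod} (resp.\ \Cref{def_negl}) holds. Put $U' \coleq U \cap \Omega' \in \cU_x(\Omega')$; since $U'$ is an open subset of $U$, the same test still holds on $U'$ (as noted just before \Cref{modchar}). It remains to observe that this test for $R$ on $U'$ coincides with the test for $R|_{\Omega'}$ on $U'$. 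Indeed, for $K, L \csub U'$ and $\vec\varphi_0, \dots, \vec\varphi_k \in C^\infty(U', \cD_L(U'))$, these functions take values in $\cD_L(U') \subseteq \cD(\Omega') \subseteq \cD(\Omega)$, so they belong simultaneously to $C^\infty(U', \cD(\Omega'))$ and to $C^\infty(U', \cD(\Omega))$; by the very definition of the restriction, $R|_{\Omega'}(\vec\varphi_0) = R(\vec\varphi_0)$, and since the inclusion $C^\infty(U', \cD(\Omega')) \hookrightarrow C^\infty(U', \cD(\Omega))$ is linear and continuous, hence smooth, the chain rule of convenient calculus yields $\ud^k(R|_{\Omega'})(\vec\varphi_0)(\vec\varphi_1, \dots, \vec\varphi_k) = \ud^k R(\vec\varphi_0)(\vec\varphi_1, \dots, \vec\varphi_k)$. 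As the seminorms $\norm{\cdot}_{K,m}$ and $\norm{\cdot}_{K,c;L,l}$ entering the two estimates are identical, the data $c, l \in \bN_0$ and $\lambda \in \poly_k$ obtained for $R$ witness moderateness of $R|_{\Omega'}$ on $U'$; this settles the moderate case.

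For negligibility the only additional point is the bounded set: the test on $U'$ yields $B \subseteq C^\infty(\Omega)$ bounded, whereas the test for $R|_{\Omega'}$ wants a bounded subset of $C^\infty(\Omega')$. Take $B' \coleq \{\, f|_{\Omega'} : f \in B \,\}$, which is bounded because restriction $C^\infty(\Omega) \to C^\infty(\Omega')$ is linear and continuous. Since every $\vec\varphi_i(x)$ is supported in $L \subseteq U'$ and each $\delta_x$ (for $x \in K \subseteq U'$) is supported in $U'$, the pairings $\langle f, \pd_x^\alpha \vec\varphi_i(x)\rangle$ and $\langle f, \pd_x^\alpha(\vec\varphi_0(x) - \delta_x)\rangle$ depend only on $f|_{\Omega'}$, so $\norm{\cdot}_{K,c;B} = \norm{\cdot}_{K,c;B'}$ and $\norm{\vec\varphi_0 - \vec\delta}_{K,c;B} = \norm{\vec\varphi_0 - \vec\delta}_{K,c;B'}$; thus $\lambda$ together with $B'$ witnesses negligibility of $R|_{\Omega'}$ on $U'$. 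There is no genuine analytic difficulty anywhere in this argument; the only thing deserving care is precisely this bookkeeping — that passing from $\Omega$ to $\Omega'$ leaves the differentials $\ud^k R$ and all the seminorms unchanged (including the $B$-weighted ones, after restricting $B$) — and it relies only on the smoothness of the inclusion of the test-function spaces and on the continuity of the restriction map for smooth functions.
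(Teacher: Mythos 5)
Your proof is correct and follows essentially the same route as the paper's: pass to $U' = U \cap \Omega'$ and observe that there the moderateness/negligibility tests for $R$ and for $R|_{\Omega'}$ literally coincide, since $C^\infty(U',\cD_L(U'))$ is the same space either way and $R|_{\Omega'}(\vec\varphi)=R(\vec\varphi)$ by definition. The only difference is that the paper carries out explicitly the one nontrivial step --- extending kernels from $C^\infty(U',\cD_L(U'))$ to $C^\infty(U,\cD_L(U))$ via a cutoff $\rho\in\cD(U')$ equal to $1$ near $K$ --- which you delegate to the remark preceding \Cref{modchar}, while your explicit replacement of $B$ by $B|_{\Omega'}$ is a bookkeeping point the paper glosses over.
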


\begin{proof}
Suppose that $R \in \cM(\Omega)$. Fix $x \in \Omega'$, which gives $U \in \cU_x(\Omega)$. Set $U' \coleq U \cap \Omega' \in \cU_x(\Omega')$ and let $K,L \csub U'$ and $m,k \in \bN_0$ be arbitrary. Then there are $c,l,\lambda$ as in \Cref{def_mod}. Let now $\vec\varphi_0', \dotsc, \vec\varphi_k' \in C^\infty(U', \cD_L(U'))$ be given. Choose $\rho \in \cD(U')$ such that $\rho \equiv 1$ on a neighborhood of $K$. Then $\rho \cdot \vec\varphi'_i \in C^\infty(U, \cD_L(U))$ ($i=0 \dotsc k$) and
\begin{align*}
 \norm{\ud^k R|_{\Omega'} (\vec\varphi_0')(\vec\varphi_1',&\dotsc,\vec\varphi_k')}_{K, m} = \norm{\ud^k R|_{\Omega'} ( \rho \vec\varphi_0')(\rho \vec\varphi'_1, \dotsc, \rho \vec\varphi'_k)}_{K, m} \\
 &= \norm{\ud^k R(\rho \vec\varphi'_0)(\rho \vec\varphi'_1,\dotsc,\rho\vec\varphi'_k)}_{K,m} \\
 &\le \lambda ( \norm{ \rho \vec\varphi_0'}_{K,c; L, l}, \dotsc, \norm{\rho \vec\varphi_k'}_{K,c; L, l}) \\
&= \lambda ( \norm{ \vec\varphi_0'}_{K,c; L, l}, \dotsc, \norm{\vec\varphi_k'}_{K,c; L, l}).
\end{align*}
Hence, the moderateness test is satisfied for $R|_{\Omega'}$.

Now suppose that $R \in \cN(\Omega)$. For the negligibility test fix $x \in \Omega'$, which gives $U \in \cU_x(\Omega)$. Set $U' \coleq U \cap \Omega'$ and let $K,L \csub U'$ and $m,k \in \bN_0$ be arbitrary. Then $\exists c,l,B,\lambda$ as in \Cref{def_negl}. Let now $\vec\varphi_0', \dotsc, \vec\varphi_k' \in C^\infty(U', \cD_L(U'))$ be given. Choose $\rho \in \cD(U')$ such that $\rho \equiv 1$ on a neighborhood of $K$. Then $\rho \cdot \vec\varphi'_i \in C^\infty(U, \cD_L(U))$ ($i=0 \dotsc k$) and
\begin{gather*}
 \norm{\ud^k R|_{\Omega'} (\vec\varphi_0')(\vec\varphi_1',\dotsc,\vec\varphi_k')}_{K, m} = \norm{\ud^k R|_{\Omega'} ( \rho \vec\varphi_0')(\rho \vec\varphi'_1, \dotsc, \rho \vec\varphi'_k)}_{K, m} \\
 = \norm{\ud^k R(\rho \vec\varphi'_0)(\rho \vec\varphi'_1,\dotsc,\rho\vec\varphi'_k)}_{K,m} \\
 \le \lambda ( \norm{ \rho \vec\varphi_0'}_{K,c; L, l}, \dotsc, \norm{\rho \vec\varphi_k'}_{K,c; L, l}, \norm{\rho \vec\varphi_0' - \vec\delta}_{K, c; B}, \dotsc, \norm{\rho\vec\varphi_k'}_{K,c; B}) \\
= \lambda ( \norm{ \vec\varphi_0'}_{K,c; L, l}, \dotsc, \norm{\vec\varphi_k'}_{K,c; L, l}, \norm{\vec\varphi_0' - \vec\delta}_{K,c; B}, \dotsc, \norm{\vec\varphi_k'}_{K,c; B})
\end{gather*}
which shows negligibility of $R|_{\Omega'}$.
\end{proof}

\begin{proposition}$\cG(\unterstrich)$ is a sheaf of algebras on $\Omega$.
\end{proposition}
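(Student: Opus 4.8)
The plan is to check the two sheaf axioms for the presheaf $W\mapsto\cG(W)=\cM(W)/\cN(W)$ on $\Omega$ (with $W\subseteq\Omega$ open and restriction maps induced by restriction of representatives). That these restrictions are well defined was just shown, and that they are algebra homomorphisms is immediate; so it remains to verify separatedness and the gluing property for an arbitrary open cover $W=\bigcup_i W_i$.

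For separatedness, suppose $R\in\cM(W)$ with $R|_{W_i}\in\cN(W_i)$ for every $i$; we must show $R\in\cN(W)$. Since the negligibility condition of \Cref{def_negl} begins with ``$(\forall x)(\exists U)$'', I would fix $x\in W$, choose $i$ with $x\in W_i$, and apply negligibility of $R|_{W_i}$ (in the form of \Cref{prop_simpl}, which permits $B\subseteq C^\infty(U)$) to obtain $U\in\cU_x(W_i)$ witnessing the test on $U$. As $U\subseteq W_i$ is open in $W$ and $R|_U=(R|_{W_i})|_U$, the same $U$ and the same data witness negligibility of $R$ at $x$, so $R\in\cN(W)$.

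For gluing, given $R_i\in\cM(W_i)$ with $R_i|_{W_i\cap W_j}-R_j|_{W_i\cap W_j}\in\cN(W_i\cap W_j)$ for all $i,j$, I must produce $R\in\cM(W)$ with $R|_{W_i}-R_i\in\cN(W_i)$. The difficulty is that $\cB$ is not a sheaf — an element of $\cB(W)$ must be defined on test objects $\vec\varphi$ whose values may have support anywhere in $W$ — so the $R_i$ cannot simply be glued; instead one patches with a partition of unity. Fix a locally finite smooth partition of unity $(\chi_\lambda)_\lambda$ on $W$ with each $\supp\chi_\lambda\csub W_{i(\lambda)}$, choose $\psi_\lambda\in\cD(W_{i(\lambda)})$ with $\psi_\lambda\equiv 1$ on a neighbourhood of $\supp\chi_\lambda$, and set, for $V\subseteq W$ open and $\vec\varphi\in C^\infty(V,\cD(W))$,
\[ R(\vec\varphi)\coleq\sum_\lambda\chi_\lambda\cdot R_{i(\lambda)}\bigl(\psi_\lambda\cdot\vec\varphi|_{V\cap W_{i(\lambda)}}\bigr), \]
where $\psi_\lambda\cdot\vec\varphi$ denotes the pointwise product $x\mapsto\psi_\lambda\vec\varphi(x)$ (which has values in $\cD(W_{i(\lambda)})$), each summand is extended by zero from $V\cap W_{i(\lambda)}$ to $V$ (legitimate since $\chi_\lambda$ vanishes off the compact set $\supp\chi_\lambda\csub W_{i(\lambda)}$), and the sum is locally finite. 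Being built from $R_{i(\lambda)}$ by multiplication with smooth functions, restriction and extension by zero — operations that are smooth and commute with sheaf restriction — this defines $R\in\cB(W)$. To see $R\in\cM(W)$, fix $x\in W$ and shrink to a neighbourhood $U$ meeting only the finitely many $\supp\chi_\lambda$ containing $x$, lying relatively compactly in each corresponding $W_{i(\lambda)}$ and inside the neighbourhoods furnished by moderateness of those $R_{i(\lambda)}$; using the chain rule for the linear maps $\vec\varphi\mapsto\psi_\lambda\vec\varphi$ and Leibniz' rule for multiplication by $\chi_\lambda$, the moderateness test for $R$ on $U$ reduces to a finite sum of polynomial estimates, which is again polynomial after passing to the maximum of the orders $c,l$ (by monotonicity of elements of $\poly_k$).

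The crux is to verify $R|_{W_j}-R_j\in\cN(W_j)$. I would localise at $x\in W_j$, set $F\coleq\{\lambda:x\in\supp\chi_\lambda\}$ (finite, and $x\in W_{i(\lambda)}\cap W_j$ for $\lambda\in F$), and choose the neighbourhood $U\in\cU_x(W_j)$ of the negligibility test so small that for every $\lambda\in F$ one has $U\csub W_{i(\lambda)}$, $\psi_\lambda\equiv 1$ on $U$, and $U$ contained in the neighbourhood provided by negligibility of $R_{i(\lambda)}|_{W_{i(\lambda)}\cap W_j}-R_j|_{W_{i(\lambda)}\cap W_j}$ at $x$, while $U\cap\supp\chi_\lambda=\emptyset$ for $\lambda\notin F$. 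Then for $\vec\varphi\in C^\infty(U,\cD_L(U))$ with $L\csub U$, each $\vec\varphi(x)$ is supported in $L\subseteq U$, whence $\psi_\lambda\cdot\vec\varphi=\vec\varphi$ on $U$, and using $\sum_{\lambda\in F}\chi_\lambda\equiv 1$ there,
\[ (R|_{W_j}-R_j)(\vec\varphi)=\sum_{\lambda\in F}\chi_\lambda\cdot\bigl(R_{i(\lambda)}|_{W_{i(\lambda)}\cap W_j}-R_j|_{W_{i(\lambda)}\cap W_j}\bigr)(\vec\varphi)\qquad\text{on }U. \]
It is precisely this choice of $U$ — forcing the auxiliary cut-offs $\psi_\lambda$ to act trivially on the small-support test objects — that I expect to be the main obstacle; without it the term $R_{i(\lambda)}(\psi_\lambda\vec\varphi)$ would not be comparable to $R_{i(\lambda)}(\vec\varphi)$ and the estimate would break. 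Differentiating this identity, estimating the finitely many differences by their negligibility tests, absorbing the derivatives of the $\chi_\lambda$ via Leibniz, replacing the several bounded sets $B_\lambda\subseteq C^\infty(W_{i(\lambda)}\cap W_j)$ by one bounded $B\subseteq C^\infty(W_j)$ (multiply their elements by cut-offs $\equiv 1$ near $K\cup L$ and extend by zero), and taking maxima of the orders then yields a single estimate with some $\lambda'\in\poli_k$ — the class $\poli_k$ being preserved because every contributing polynomial vanishes when its $z$-variables are set to zero. This establishes the gluing axiom; since each $\cG(W)$ is an associative commutative algebra and the restriction maps are algebra homomorphisms, $\cG(\unterstrich)$ is a sheaf of algebras.
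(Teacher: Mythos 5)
Your proposal is correct and follows essentially the same route as the paper: glue with a locally finite partition of unity, precompose each $R_{i}$ with a cut-off that forces the test objects into $\cD(W_{i})$, verify moderateness by reducing to a finite sum via Leibniz' rule and maxima of the orders, and prove $R|_{W_j}-R_j\in\cN(W_j)$ by localising so that the cut-offs act as the identity on small-support test objects. The only (cosmetic) difference is that you use one-variable cut-offs $\psi_\lambda\in\cD(W_{i(\lambda)})$ equal to $1$ near $\supp\chi_\lambda$, whereas the paper uses two-variable kernels $\rho_i\in C^\infty(X_i,\cD(X_i))$ equal to $1$ near the diagonal of $X_i\times X_i$; both serve the same purpose and your version additionally identifies the key point (that the cut-offs must become invisible on the shrunken neighbourhood) explicitly.
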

\begin{proof}
Let $X \subseteq \Omega$ be open and $(X_i)_i$ be a family of open subsets of $\Omega$ such that $\bigcup_i X_i = X$.

We first remark that if $R \in \cB(X)$ satisfies $R|_{X_i} \in \cN(X_i)$ for all $i$ then $R \in \cN(X)$, as is evident from the definition of negligibility.

Suppose now that we are given $R_i \in \cM(X_i)$ such that $R_i|_{X_i \cap X_j} - R_j|_{X_i \cap X_j} \in \cN(X_i \cap X_j)$ for all $i,j$ with $X_i \cap X_j \ne \emptyset$. Let $(\chi_i)_i$ be a partition of unity subordinate to $(X_i)_i$, i.e., a family of mappings $\chi_i \in C^\infty(X)$ such that $0 \le \chi_i \le 1$, $(\supp \chi_i)_i$ is locally finite, $\sum_i \chi_i(x) = 1$ for all $x \in X$ and $\supp \chi_i \subseteq X_i$. Choose functions $\rho_i \in C^\infty(X_i, \cD(X_i))$ which are equal to $1$ on an open neighborhood of the diagonal in $X_i \times X_i$ for each $i$. For $V \subseteq X$ open and $\vec\varphi \in C^\infty(V, \cD(X))$ we define $R_V(\vec\varphi) \in C^\infty(V)$ by
\begin{equation}\label{jaenner}
R_V(\vec\varphi) \coleq \sum_i \chi_i|_V \cdot (R_i)_{V \cap X_i} ( \rho_i|_{V \cap X_i} \cdot \vec\varphi|_{V \cap X_i}).
\end{equation}


For showing smoothness of $R_V$ consider a curve $c \in C^\infty(\bR, C^\infty(V, \cD(X)))$. We have to show that $t \mapsto R_V(c(t))$ is an element of $C^\infty(\bR, C^\infty(V))$. By \cite[3.8, p.~28]{KM} it suffices to show that for each open subset $W \subseteq V$ which is relatively compact in $V$ the curve $t \mapsto R_V(c(t))|_W = R_W(c(t)|_W)$ is smooth, but this holds because the sum in \eqref{jaenner} then is finite. Hence, $(R_V)_V \in \cB(\Omega)$.

Fix $x \in X$ for the moderateness test. There is a finite index set $F$ and an open neighborhood $W \in \cU_x(X)$ such that $W \cap \supp \chi_i \ne \emptyset$ implies $i \in F$. We can also assume that $x \in \bigcap_{i \in F}X_i$. Let $Y$ be a neighborhood of $x$ such that $\rho_i \equiv 1$ on $Y \times Y$ for all $i \in F$. For each $i \in F$ let $U_i \in \cU_x(X_i)$ be obtained from moderateness of $R_i$ as in \Cref{def_mod}. Set $U \coleq \bigcap_{i \in F} U_i \cap W \cap Y \in \cU_x(X)$, and let $K,L \csub U$ as well as $m,k \in \bN_0$ be arbitrary. For each $i \in F$ there are $c_i, l_i,\lambda_i$ such that for any $\vec\varphi_0, \dotsc, \vec\varphi_k \in C^\infty(U, \cD_L(U))$ we have
\[ \norm{\ud^k R_i ( \vec\varphi_0)(\vec\varphi_1, \dotsc, \vec\varphi_k)}_{K,m} \le \lambda_i ( \norm{\vec\varphi_0}_{K,c_i; L, l_i}, \dotsc, \norm{\vec\varphi_k}_{K,c_i; L, l_i}). \]
Now we have, for $\vec\varphi \in C^\infty(U, \cD_L(U))$,
\[ R(\vec\varphi)|_W = \sum_{i \in F} \chi_i|_W \cdot (R_i)_{W \cap X_i} ( \rho_i\vec\varphi|_{W \cap X_i}) \]
and hence, for $\vec\varphi_0, \dotsc, \vec\varphi_k \in C^\infty(U, \cD_L(U))$,
\begin{gather*}
 \ud^k R ( \vec\varphi_0)(\vec\varphi_1, \dotsc, \vec\varphi_k)|_W \\
 = \sum_{i \in F} \chi_i|_W \cdot \ud^k ((R_i)_{W \cap X_i}) ( \rho_i \vec\varphi_0|_{W \cap X_i})(\rho_i \vec\varphi_1|_{W \cap X_i}, \dotsc, \rho_i \vec\varphi_k|_{W \cap X_i}).
\end{gather*}
We see that
\begin{gather*}
 \norm{\ud^k R(\vec\varphi_0)(\vec\varphi_1, \dotsc, \vec\varphi_k)}_{K,m} \\
 \le \sum_{i \in F} C(m) \cdot \norm{\chi_i}_{K,m} \cdot \lambda_i ( \norm{\vec\varphi_0}_{K,c_i; L, l_i}, \dotsc, \norm{\vec\varphi_k}_{K,c_i; L, l_i}) \\
 = \lambda ( \norm{\vec\varphi_0}_{K,c; L, l}, \dotsc, \norm{\vec\varphi_k}_{K,c; L, l})
\end{gather*}
with $c = \max_{j \in F} c_j$, $l = \max_{j\in F} l_j$, some constant $C(m)$ coming from the Leibniz rule, and $\lambda \in \poly_k$ given by
\[ \lambda = \sum_{i \in F} C(m) \norm{\chi_i}_{K,m} \cdot \lambda_i. \]
This shows that $R$ is moderate. Finally, we claim that $R|_{X_j} - R_j \in \cN(X_j)$ for all $j$. For this we first note that
\[ (R|_{X_j} - R_j)(\vec\varphi) = \sum_i \chi_i|_{X_j} \cdot ( R_i ( \rho_i\vec\varphi|_{X_i \cap X_j}) - R_j(\vec\varphi)) \]
for $\vec\varphi \in C^\infty(X_j, \cD(X_j))$. Again, for $x \in X_j$ there is a finite index set $F$ and an open neighborhood $W \in \cU_x(X)$ such that $W \cap \supp \chi_i \ne \emptyset$ implies $i \in F$, and we can assume that $x \in \bigcap_{i \in F}X_i$. Let $Y$ be a neighborhood of $x$ such that $\rho_i \equiv 1$ on $Y \times Y$ for all $i \in F$ and let $U_i \in \cU_x ( X_i \cap X_j)$ be given by the negligibility test of $R_i|_{X_i \cap X_j} - R_j|_{X_i \cap X_j}$ according to \Cref{def_negl}. Set $U \coleq \bigcap_{i \in F} U_i \cap W \cap Y$. Fix any $K,L \csub U$ and $m,k \in \bN_0$. For each $i \in F$ there are $c_i, l_i, \lambda_i, B_i$ such that for $\vec\varphi_0, \dotsc, \vec\varphi_k \in C^\infty(U, \cD_L(U))$ we have
\begin{gather*}
\norm{\ud^k ( R_i|_{X_i \cap X_j} - R_j|_{X_i \cap X_j})(\vec\varphi_0)(\vec\varphi_1,\dotsc,\vec\varphi_k)}_{K,m} \\
\le \lambda_i ( \norm{\vec\varphi_0}_{K,c_i; L, l_i}, \dotsc, \norm{\vec\varphi_0 - \vec\delta}_{K, c_i; B_i}, \norm{\vec\varphi_1}_{K,c_i; B_i}, \dotsc, \norm{\vec\varphi_k}_{K,c_i; B_i}).
\end{gather*}
As above, we then have
\begin{gather*}
 \norm{\ud^k ( R|_{X_j} - R_j)(\vec\varphi_0)(\vec\varphi_1, \dotsc, \vec\varphi_k)}_{K,m} \\
 \le \sum_{i \in F} C(m) \cdot \norm{\chi_i}_{K,m} \cdot \lambda_i ( \norm{\vec\varphi_0}_{K,c_i; L, l_i}, \dotsc, \norm{\vec\varphi_0 - \vec\delta}_{K, c_i; B_i}, \norm{\vec\varphi_1}_{K,c_i; B_i}, \dotsc) \\
 \le \lambda ( \norm{\vec\varphi_0}_{K,c; L, l}, \dotsc, \norm{\vec\varphi_0 - \vec\delta}_{K, c; B}, \norm{\vec\varphi_0}_{K,c; B}, \dotsc )
\end{gather*}
with $c = \max_{i \in F} c_i$, $l = \max_{i \in F} l_i$, $B = \bigcup_{i\in F} B_i$, and $\lambda \in \poli_k$ given by
\[ \lambda = \sum_{i \in F} C(m) \norm{\chi}_{K,m} \cdot \lambda_i. \]
This completes the proof.
\end{proof}

\section{An elementary version}

We will now give a variant of the construction of \Cref{sec_quot} similar in spirit to Colombeau's elementary algebra \cite{ColElem}: if we only consider derivatives along the coordinate lines of $\bR^n$ we can replace the smoothing kernels $\vec\varphi \in C^\infty(U, \cD_L(\Omega))$ by convolutions. This way, one can use a simpler basic space which does not involve calculus on infinite dimensional locally convex spaces anymore:

\begin{definition}Let $\Omega \subseteq \bR^n$ be open. We set
\[ U(\Omega) \coleq \{ ( \varphi, x) \in \cD(\bR^n) \times \Omega\ |\ \supp \varphi + x \subseteq \Omega\}. \]
and define $\basice(\Omega)$ to be the set of all mappings $R \colon U(\Omega) \to \bC$ such that $R(\varphi, \cdot)$ is smooth for fixed $\varphi$.
\end{definition}

Note that this is almost the basic space used originally by Colombeau (see \cite[1.2.1, p.~18]{ColElem} or \cite[Definition 1.4.3, p.~59]{GKOS}) but with $\cD(\bR^n)$ in place of the space of test functions whose integral equals one. We now introduce a notation for the convolution kernel determined by a test function.
\begin{definition}For $\varphi \in \cD(\bR^n)$ we define $\starred\varphi \in C^\infty(\bR^n, \cD(\bR^n))$ by
 \[ \starred\varphi(x)(y) \coleq \varphi(y-x). \]
\end{definition}
In fact, with this definition we have $\langle u, \starred \varphi \rangle = u * \check \varphi$, where as usually we set $\check \varphi(y) \coleq \varphi(-y)$.
Furthermore, for $c \in \bN_0$ we write
\[ \norm{\varphi}_c \coleq \sup_{x \in \bR^n, \abso{\alpha} \le c} \abso{\pd^\alpha \varphi(x)} \qquad (\varphi \in \cD(\bR^n)). \]
The direct adaptation of \Cref{def_mod,def_negl} then looks as follows:

\begin{definition}\label{def_emodneg}
 Let $R \in \basice(\Omega)$. Then $R$ is called \emph{moderate} if
\begin{gather*}
(\forall x \in \Omega)\ (\exists U \in \cU_x(\Omega))\ (\forall K,L \csub U: K \csub L)\ (\forall m \in \bN_0)\\
(\exists c \in \bN_0)\ (\exists \lambda \in \poly_0) \ (\forall \varphi \in \cD(\bR^n): K + \supp \varphi \subseteq L):\\
\norm{ R(\varphi, .)}_{K, m} \le \lambda ( \norm{\varphi}_{c}).
\end{gather*}
The subset of all moderate elements of $\basice(\Omega)$ is denoted by $\mode(\Omega)$.

Similarly, $R$ is called \emph{negligible} if
\begin{gather*}
(\forall x \in \Omega)\ (\exists U \in \cU_x(\Omega))\ (\forall K,L \csub U: K \csub L)\ (\forall m \in \bN_0)\ (\exists c \in \bN_0)\\
(\exists \lambda \in \poli_0)\ (\exists B \subseteq C^\infty(U)\ \textrm{bounded})\ (\forall \varphi \in \cD(\bR^n): K + \supp \varphi \subseteq L):\\
\norm{R(\varphi, .)}_{K, m} \le \lambda ( \norm{\varphi}_{c}, \norm{ \starred\varphi - \vec\delta}_{K, c; B}).
\end{gather*}
The subset of all negligible elements of $\basice(\Omega)$ is denoted by $\nege(\Omega)$.
\end{definition}

It is convenient to work with the following simplification of these definitions.

\begin{proposition}\label{emodnegchar}$R \in \basice(\Omega)$ is moderate if and only if
 \begin{gather*}
  (\forall K \csub \Omega)\ (\exists r>0: \overline{B_r(K)} \csub \Omega)\ (\forall m \in \bN_0)\ (\exists c \in \bN_0)\\
  (\exists \lambda \in \poly_0) \ (\forall \varphi \in \cD(\bR^n): \supp \varphi \subseteq B_r(0)):\\
\norm{ R(\varphi, .)}_{K, m} \le \lambda ( \norm{\varphi}_{c}).
 \end{gather*}
Similarly, $R \in \basice(\Omega)$ is negligible if and only if
 \begin{gather*}
  (\forall K \csub \Omega)\ (\exists r>0: \overline{B_r(K)} \csub \Omega)\ (\forall m \in \bN_0)\ (\exists c \in \bN_0)\\
(\exists \lambda \in \poli_0)\ (\exists B \subseteq C^\infty(\Omega)\ \textrm{bounded})\ (\forall \varphi \in \cD(\bR^n): \supp \varphi \subseteq B_r(0)):\\
\norm{R(\varphi, .)}_{K, m} \le \lambda ( \norm{\varphi}_{c}, \norm{ \starred\varphi - \vec\delta}_{K, c; B}).
\end{gather*}
\end{proposition}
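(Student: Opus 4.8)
Here is a proof plan.

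The two equivalences are each established by proving the two implications separately: in one direction local estimates are patched over a compact set via a partition of unity, in the other one restricts to a small ball around a point. Throughout, the elementary monotonicity facts $\norm{R(\varphi,\cdot)}_{K,m}\le\norm{R(\varphi,\cdot)}_{K',m}$, $\norm{\varphi}_c\le\norm{\varphi}_{c'}$ and $\norm{\starred\varphi-\vec\delta}_{K,c;B}\le\norm{\starred\varphi-\vec\delta}_{K',c';B'}$ for $K\subseteq K'$, $c\le c'$, $B\subseteq B'$, the monotonicity of the elements of $\poly_0$ and $\poli_0$, and the fact that a finite sum of elements of $\poly_0$ (resp.\ $\poli_0$) again lies in $\poly_0$ (resp.\ $\poli_0$), are used without further comment.

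\emph{\Cref{def_emodneg} implies the stated condition.} Let $K\csub\Omega$. For each $x\in K$ take $U_x\in\cU_x(\Omega)$ as in \Cref{def_emodneg}, extract a finite subcover $U_1,\dots,U_N$ of $K$ and a partition of unity $\psi_1,\dots,\psi_N$ with $\supp\psi_j\csub U_j$ and $\sum_j\psi_j\equiv1$ near $K$. Put $K_j\coleq K\cap\supp\psi_j$; then $K_j\subseteq K$ is compact, $K_j\csub U_j$ and $\bigcup_jK_j=K$. Choose $L_j\csub U_j$ with $K_j\csub L_j$ and $r_j>0$ with $\overline{B_{r_j}(K_j)}\subseteq L_j$, and set $r\coleq\min_jr_j$, shrunk further so that $\overline{B_r(K)}\csub\Omega$. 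Fixing $m$, apply the test of \Cref{def_emodneg} on $U_j$ to $(K_j,L_j)$ to obtain $c_j$, $\lambda_j$ (and a bounded $B_j\subseteq C^\infty(U_j)$ in the negligible case); since $K_j+\supp\varphi\subseteq B_r(K_j)\subseteq L_j$ whenever $\supp\varphi\subseteq B_r(0)$, the resulting estimate holds for all such $\varphi$. In the negligible case choose $\chi_j\in\cD(U_j)$ with $\chi_j\equiv1$ near $\overline{B_r(K_j)}$ and replace $B_j$ by $B_j'\coleq\{\chi_jf:f\in B_j\}\subseteq C^\infty(\Omega)$ (bounded); since, for $w\in K_j$ and $\supp\varphi\subseteq B_r(0)$, both $\starred\varphi(w)$ and $\delta_w$ are supported where $\chi_j\equiv1$, one has $\norm{\starred\varphi-\vec\delta}_{K_j,c_j;B_j}=\norm{\starred\varphi-\vec\delta}_{K_j,c_j;B_j'}$. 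Now put $c\coleq\max_jc_j$, $\lambda\coleq\sum_j\lambda_j$ and $B\coleq\bigcup_jB_j'$: for $\supp\varphi\subseteq B_r(0)$, any $z\in K$ and $\abso{\alpha}\le m$, choosing $j$ with $z\in K_j$ gives $\abso{\pd^\alpha_zR(\varphi,z)}\le\norm{R(\varphi,\cdot)}_{K_j,m}\le\lambda_j(\dots)\le\lambda(\dots)$, where the arguments are the seminorms over $K_j$, which are dominated by those over $K$ with parameters $c$ and $B$. Taking the supremum over $z$ and $\alpha$ gives the stated condition.

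\emph{The stated condition implies \Cref{def_emodneg}.} Fix $x\in\Omega$, choose $\rho_0>0$ with $\overline{B_{3\rho_0}(x)}\csub\Omega$, put $V\coleq B_{\rho_0}(x)$ and apply the stated condition to the compact set $\overline V$ to obtain $r_0>0$; set $U\coleq B_\rho(x)$ with $\rho\coleq\min(\rho_0,r_0/3)$, so that $\overline U\subseteq\overline V$ and $U-U\subseteq B_{2\rho}(0)\subseteq B_{r_0}(0)$. If $K,L\csub U$ with $K\csub L$ and $K+\supp\varphi\subseteq L$, then $\supp\varphi\subseteq U-U\subseteq B_{r_0}(0)$, so for each $m$ the stated estimate on $\overline V$ gives $c$, $\lambda$ (and bounded $B\subseteq C^\infty(\Omega)$) with $\norm{R(\varphi,\cdot)}_{K,m}\le\norm{R(\varphi,\cdot)}_{\overline V,m}\le\lambda(\norm\varphi_c)$, resp.\ $\le\lambda(\norm\varphi_c,\norm{\starred\varphi-\vec\delta}_{\overline V,c;B})$. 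In the moderate case this is already the assertion of \Cref{def_emodneg}. In the negligible case the seminorm over $\overline V$ must be replaced by one over $K$: writing $\tau_a$ for the translation $(\tau_ag)(y)=g(y-a)$, the covariance $(\starred\varphi-\vec\delta)(w)=\tau_w\bigl[(\starred\varphi-\vec\delta)(0)\bigr]$ together with $\langle f,\tau_a\mu\rangle=\langle\tau_{-a}f,\mu\rangle$ yields, for $w\in\overline V$, $f\in B$ and any $w_0\in K$,
\[ \langle f,\pd_w^\gamma(\starred\varphi(w)-\delta_w)\rangle=\langle\tau_{w_0-w}f,\pd_{w_0}^\gamma(\starred\varphi(w_0)-\delta_{w_0})\rangle. \]
Since $w_0-w\in\overline V-\overline V\subseteq\overline{B_{2\rho_0}(0)}$ and $U-(\overline V-\overline V)\subseteq B_{3\rho_0}(x)\csub\Omega$, the function $\tau_{w_0-w}f$ restricts to an element of the set $B'\coleq\{(\tau_af)|_U:a\in\overline V-\overline V,\ f\in B\}$, which is a bounded subset of $C^\infty(U)$ (a routine check from the definition of boundedness). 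Hence $\norm{\starred\varphi-\vec\delta}_{\overline V,c;B}\le\norm{\starred\varphi-\vec\delta}_{K,c;B'}$, and \Cref{def_emodneg} follows with $B'$ in place of $B$.

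The compactness and partition argument and the seminorm bookkeeping are routine; the one genuinely delicate step is the last. In the stated condition the compact set carried by the $\vec\delta$-seminorm equals the compact set $K$ occurring there, which after localisation around $x$ is a fixed neighbourhood of $x$, whereas in \Cref{def_emodneg} it is an arbitrary small compact subset of $U$, and these two need not be comparable; it is precisely the translation covariance of $\starred\varphi-\vec\delta$ that allows one to transport the estimate between them while keeping the test functions in a bounded subset of $C^\infty(U)$.
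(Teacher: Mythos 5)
Your proof is correct and follows the same overall strategy as the paper: a finite cover of $K$ by the neighbourhoods from \Cref{def_emodneg} with a common radius $r$ and a cut-off to push the bounded sets $B_j$ from $C^\infty(U_j)$ into $C^\infty(\Omega)$ for one implication, and localisation to a small ball around $x$ combined with the triangle-inequality computation showing $\supp\varphi\subseteq B_r(0)$ for the other. The one place where you genuinely go beyond the paper is the last step of the negligibility converse. The paper disposes of it with the single remark that one restricts the elements of $B$ to $U$, which works only if the hypothesis is applied directly to the small compact set $K$ of the test (and then one has to worry about the radius $r_K$ furnished for that $K$ being large enough); you instead apply the hypothesis once to a fixed compact neighbourhood $\overline V$ of $x$, which forces you to transport the seminorm $\norm{\starred\varphi-\vec\delta}_{\overline V,c;B}$ down to one over the smaller set $K$ — an inequality that goes the wrong way for naive monotonicity, since this seminorm sits inside $\lambda\in\poli_0$ on the right-hand side and cannot be absorbed into $\norm{\varphi}_c$ without destroying membership in $\poli_0$. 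Your translation-covariance identity $(\starred\varphi-\vec\delta)(w)=\tau_{w-w_0}\bigl[(\starred\varphi-\vec\delta)(w_0)\bigr]$, together with the bounded set $B'$ of translated restrictions, resolves this cleanly and is a correct and worthwhile elaboration of a point the paper leaves implicit.
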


\begin{proof}
 Suppose $R$ is moderate and fix $K \csub \Omega$. We can cover $K$ by finitely many open sets $U_i$ obtained from \Cref{def_emodneg} and write $K = \bigcup_i K_i$ with $K_i \csub U_i$. Choose $r>0$ such that $L_i \coleq \overline{B_r(K_i)} \csub U_i$ for all $i$. Fixing $m$, by moderateness there exist $c_i$ and $\lambda_i$ for each $i$. Set $c = \max_i c_i$ and choose $\lambda$ with $\lambda \ge \lambda_i$ for all $i$. Now given $\varphi \in \cD(\bR^n)$ with $\supp \varphi \subseteq B_r(0)$ we also have $K_i + \supp \varphi \subseteq L_i$ and we can estimate
\begin{gather*}
 \norm{R(\varphi, .)}_{K,m} \le \sup_i \norm{R(\varphi,.)}_{K_i, m} \le \sup_i \lambda_i ( \norm{\varphi}_{c_i} ) \le \lambda ( \norm{\varphi}_c ).
\end{gather*}

Conversely, suppose the condition holds and fix $x \in \Omega$ for the moderateness test. Choose $a>0$ such that $\overline{B_a(x)} \csub \Omega$. By assumption there is $r>0$ with $\overline{B_{r+a}(x)} \csub \Omega$. Set $U \coleq B_{r/2}(x)$. Then, fix $K \csub L \csub U$ and $m$ for the moderateness test. There are $c$ and $\lambda$ by assumption. Now given $\varphi$ with $K + \supp \varphi \subseteq L$, we see that for $y \in \supp \varphi$ and an arbitrary point $z \in K$ we have $\abso{y} \le \abso{y+z-x} + \abso{z-x} < r$, which means that $\supp \varphi \subseteq B_r(0)$. But then $\norm{R(\varphi,.)}_{K,m} \le \lambda(\norm{\varphi}_c)$ as desired.

If $R$ is negligible we proceed similarly until the choice of $K_i \csub L_i \csub U_i$ and $m$ gives $c_i, \lambda_i$ and $B_i$. Choose $\chi_i \in \cD(U_i)$ with $\chi_i \equiv 1$ on a neighborhood of $L_i$, and define $B \coleq \bigcup_i \{ \chi_i f\ |\ f \in B_i \}$, which is bounded in $C^\infty(\Omega)$. Then with $c = \max_i c_i$ and $\lambda \ge \lambda_i$ for all $i$ we have
\[ \norm{R(\varphi, .)}_{K,m} \le \sup_i \lambda_i ( \norm{\varphi}_{c_i}, \norm{\starred\varphi - \vec\delta}_{K_i, c_i; B_i} ) \le \lambda ( \norm{\varphi}_c, \norm{\starred\varphi - \vec\delta}_{K,c; B}). \]

The converse is seen as for moderateness by restricting the elements of $B \subseteq C^\infty(\Omega)$ to $U$.
\end{proof}

The embeddings now take the following form.

\begin{definition}
 We define $\iotae \colon \cD'(\Omega) \to \basice(\Omega)$ and $\sigmae \colon C^\infty(\Omega) \to \basice(\Omega)$ by
\begin{alignat*}{2}
 (\iotae u)(\varphi, x) & \coleq \langle u, \varphi(.-x) \rangle & \qquad &(u \in \cD'(\Omega)) \\
 (\sigmae f)(\varphi, x) & \coleq f(x) & \qquad &(f \in C^\infty(\Omega)).
\end{alignat*}
\end{definition}

Partial derivatives on $\basice(\Omega)$ then can be defined via differentiation in the second variable:

\begin{definition}
Let $R \in \basice(\Omega)$. We define derivatives $\D_i \colon \basice(\Omega) \to \basice(\Omega)$ ($i=1, \dotsc, n$) by
\begin{align*}
(\D_i R)(\varphi, x) & \coleq \frac{\pd}{\pd x_i} ( x \mapsto R ( \varphi, x)).
\end{align*}
\end{definition}

\begin{theorem}We have $\D_i ( \mode(\Omega)) \subseteq \mode(\Omega)$ and $\D_i ( \nege(\Omega)) \subseteq \nege(\Omega)$,
 \end{theorem}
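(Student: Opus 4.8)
The plan is to show that $\D_i$ preserves the estimates of \Cref{emodnegchar} by relating the relevant seminorms of $\D_i R$ to those of $R$. The crucial observation, exactly as in the classical theory, is that differentiating in the point variable can be traded for differentiating in the kernel: for $(\varphi, x) \in U(\Omega)$ one has
\[
\frac{\pd}{\pd x_i}\bigl(x \mapsto R(\varphi, x)\bigr)
\]
and, because $R(\varphi, x)$ depends on $\varphi$ only through $\starred\varphi(x) = \varphi(\cdot - x)$ in the cases of interest — but more robustly, because for the \emph{abstract} $R \in \basice(\Omega)$ there is no such dependence — we cannot in general swap variables. So instead I would argue directly: fix $K \csub \Omega$ and apply \Cref{emodnegchar} to the slightly larger compact set $K' \coleq \overline{B_{\rho}(K)}$ for a small $\rho > 0$ with $\overline{B_\rho(K)} \csub \Omega$, obtaining $r > 0$, and for the order $m+1$ a constant $c$ and $\lambda \in \poly_0$ (resp.\ $\lambda \in \poli_0$ and $B$). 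Then estimate $\norm{\D_i R(\varphi,\cdot)}_{K,m}$ by $\norm{R(\varphi,\cdot)}_{K',m+1}$, which is $\le \lambda(\norm{\varphi}_c)$ (resp.\ the negligibility bound), and this is of the required form with the same $\lambda$.

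The one subtlety is that $\D_i R(\varphi, x) = \frac{\pd}{\pd x_i} R(\varphi, x)$ is literally a partial derivative of the smooth function $x \mapsto R(\varphi, x)$, so $\norm{\D_i R(\varphi, \cdot)}_{K,m} \le \norm{R(\varphi, \cdot)}_{K, m+1} \le \norm{R(\varphi,\cdot)}_{K',m+1}$ directly, with \emph{no} extra constant and no change of the test-function side at all; there is no Leibniz term because no cutoff is involved. Hence $r$ chosen for $K'$ at order $m+1$ works for $K$ at order $m$, and the same $c, \lambda$ (and $B$ in the negligible case) serve. In fact one can simply take $K' = K$ and use the $r$ and data furnished for $K$ at order $m+1$; no enlargement is even needed, since \Cref{emodnegchar} already quantifies over all $m$. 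So the proof is: given $K$ and $m$, apply the characterization to $K$ and $m+1$ to get $r, c, \lambda$ (resp.\ $r, c, \lambda, B$); then for every $\varphi$ with $\supp\varphi \subseteq B_r(0)$,
\[
\norm{\D_i R(\varphi,\cdot)}_{K,m} \le \norm{R(\varphi,\cdot)}_{K,m+1} \le \lambda(\norm{\varphi}_c)
\]
in the moderate case, and $\le \lambda(\norm{\varphi}_c, \norm{\starred\varphi - \vec\delta}_{K,c;B})$ in the negligible case. This is precisely the criterion of \Cref{emodnegchar} for $\D_i R$, so $\D_i R$ is moderate (resp.\ negligible).

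I do not expect any real obstacle here; unlike the derivative theorem for the full algebra (where $\widehat\D_X$ produces the genuinely new term $\ud^{k+1}R(\vec\varphi)(\DSK_X\vec\varphi,\dots)$ that must be re-expressed via $\norm{\DSK_X\vec\varphi}_{K,c;L,l} \le C\norm{\vec\varphi}_{K,c+1;L,l+1}$), here $\D_i$ acts only on the finite-dimensional second argument, so the estimate is immediate from raising the differentiation order $m$ by one. The only care needed is to note that $\basice(\Omega)$ is closed under $\D_i$ (the second-variable partial derivative of a smooth function is smooth), which is built into the definition of $\D_i$, and that the same $r$ (depending on $K$ and $m+1$) is admissible — it is, since the condition "$\overline{B_r(K)} \csub \Omega$" does not involve $m$. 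One writes this out in two short paragraphs, one for moderateness and one for negligibility, each a two-line estimate.
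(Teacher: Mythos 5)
Your proof is correct and is essentially the argument the paper has in mind (the paper simply declares the claim ``evident from the definitions''): since the test quantifies over all orders $m$, the bound $\norm{\D_i R(\varphi,\cdot)}_{K,m} \le \norm{R(\varphi,\cdot)}_{K,m+1}$ lets you reuse the data $r, c, \lambda$ (and $B$) furnished for $K$ at order $m+1$, with the kernel side of the estimate untouched. Your observation that no enlargement of $K$ and no Leibniz correction is needed is exactly right.
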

\begin{proof}
 This is evident from the definitions.
\end{proof}

\begin{proposition}We have $\D_i \circ \iota = \iota \circ \pd_i$ and $\D_i \circ \sigma = \sigma \circ \pd_i$.
\end{proposition}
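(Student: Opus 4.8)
The plan is to verify both identities by unwinding the definitions, so that — as the author's one-line proof indicates — nothing beyond chain-rule bookkeeping is needed. (Note that the embeddings written $\iota$, $\sigma$ in the statement must be the elementary ones $\iotae$, $\sigmae$, since $\D_i$ is only defined on $\basice(\Omega)$.)

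First I would dispose of $\sigmae$: since $(\sigmae f)(\varphi,x) = f(x)$ is independent of $\varphi$, differentiating in the second slot gives $(\D_i\sigmae f)(\varphi,x) = \pd_i f(x) = (\sigmae\pd_i f)(\varphi,x)$ directly.

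For $\iotae$, I would fix $(\varphi,x)\in U(\Omega)$ and observe that on the open set $\{x : \supp\varphi + x\subseteq\Omega\}$ the assignment $x\mapsto\varphi(\,\cdot - x)$ is a smooth map into $\cD_L(\bR^n)$ for a suitable compact $L$ (translation being smooth in $\cD$), so that composing with the continuous linear functional $u$ and invoking the chain rule of convenient calculus shows $x\mapsto\langle u,\varphi(\,\cdot-x)\rangle$ is smooth with $\frac{\pd}{\pd x_i}\langle u,\varphi(\,\cdot-x)\rangle = \langle u, \frac{\pd}{\pd x_i}(y\mapsto\varphi(y-x))\rangle = -\langle u,(\pd_i\varphi)(\,\cdot-x)\rangle$. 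Hence $(\D_i\iotae u)(\varphi,x) = -\langle u,(\pd_i\varphi)(\,\cdot-x)\rangle$. On the other hand, the definition of the distributional derivative gives $(\iotae\pd_i u)(\varphi,x) = \langle\pd_i u,\varphi(\,\cdot-x)\rangle = -\langle u,\pd_i(y\mapsto\varphi(y-x))\rangle = -\langle u,(\pd_i\varphi)(\,\cdot-x)\rangle$, where I use $\pd_i(y\mapsto\varphi(y-x)) = (\pd_i\varphi)(\,\cdot-x)$ whereas $\frac{\pd}{\pd x_i}(y\mapsto\varphi(y-x)) = -(\pd_i\varphi)(\,\cdot-x)$ — the two sign conventions match, which is precisely why the identity holds. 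Comparing the two expressions finishes the proof.

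The only step that is not purely formal is the differentiation under the bracket, i.e.\ that $\frac{\pd}{\pd x_i}\langle u,\varphi(\,\cdot-x)\rangle = \langle u,\frac{\pd}{\pd x_i}\varphi(\,\cdot-x)\rangle$; but this is the same smoothness-of-composition fact already used to see that $\iotae u$ lies in $\basice(\Omega)$ (cf.\ the analogous discussion for $\iota$ in the construction of the full algebra), so I do not expect any genuine obstacle here.
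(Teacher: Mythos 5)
Your proof is correct and follows essentially the same route as the paper's one-line computation: differentiate under the bracket to get $-\langle u,(\pd_i\varphi)(\cdot-x)\rangle$ and identify this with $\langle \pd_i u,\varphi(\cdot-x)\rangle$ via the definition of the distributional derivative, with the $\sigma$ case being immediate. Your extra remarks --- that the statement's $\iota,\sigma$ must mean $\iotae,\sigmae$, and the justification of differentiation under the bracket --- are sound but only make explicit what the paper leaves implicit.
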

\begin{proof}
$\D_i ( \iota u)(\varphi, x) = \frac{\pd}{\pd x_i} \langle u(y), \varphi(y-x) \rangle = \langle u(y), - (\pd_i \varphi)(y-x) \rangle = \langle \pd_i u(y), \varphi(y-x) \rangle = \iota( \pd_i u)(\varphi, x)$. The second claim is clear.
\end{proof}

\begin{proposition}$\nege(\Omega) \subseteq \mode(\Omega)$.
\end{proposition}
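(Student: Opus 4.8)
The plan is to follow the proof of \Cref{negmod}, but working with the simplified characterizations of \Cref{emodnegchar}, where there are no higher differentials to deal with; the whole task then reduces to absorbing the extra argument $\norm{\starred\varphi - \vec\delta}_{K,c;B}$ appearing in the negligibility estimate into a polynomial in $\norm{\varphi}_c$ alone. So I would fix $R \in \nege(\Omega)$ and verify the moderateness criterion of \Cref{emodnegchar}: given $K \csub \Omega$, negligibility supplies $r > 0$ with $\overline{B_r(K)} \csub \Omega$, and given $m \in \bN_0$ it supplies $c \in \bN_0$, $\lambda \in \poli_0$ and a bounded set $B \subseteq C^\infty(\Omega)$ with $\norm{R(\varphi,.)}_{K,m} \le \lambda(\norm{\varphi}_c, \norm{\starred\varphi - \vec\delta}_{K,c;B})$ for all $\varphi \in \cD(\bR^n)$ with $\supp\varphi \subseteq B_r(0)$. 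I keep the same $K$, $r$, $c$ and $B$ for the moderateness test.

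The key step is to show that there is $\lambda_1 \in \poly_0$ with
\[ \norm{\starred\varphi - \vec\delta}_{K,c;B} \le \lambda_1(\norm{\varphi}_c) \qquad (\varphi \in \cD(\bR^n),\ \supp\varphi \subseteq B_r(0)). \]
To obtain it I unwind the seminorm: for $x \in K$, $\abso{\alpha} \le c$ and $f \in B$ one has $\langle f, \pd_x^\alpha \vec\delta(x)\rangle = (\pd^\alpha f)(x)$ and $\langle f, \pd_x^\alpha \starred\varphi(x)\rangle = (-1)^{\abso{\alpha}} \int f(y)\,(\pd^\alpha\varphi)(y-x)\,\ud y$, the latter an integral over $y \in x + \supp\varphi \subseteq \overline{B_r(K)} \subseteq \Omega$. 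Since $B$ is bounded in $C^\infty(\Omega)$, its elements and their derivatives are bounded uniformly on the fixed compact set $\overline{B_r(K)}$; hence $\abso{\langle f, \pd_x^\alpha\starred\varphi(x)\rangle} \le \abso{B_r(0)} \cdot \sup_{g \in B}\norm{g}_{\overline{B_r(K)},0} \cdot \norm{\varphi}_c$ and $\abso{\langle f, \pd_x^\alpha\vec\delta(x)\rangle} \le \sup_{g\in B}\norm{g}_{K,c}$. Adding these and taking the supremum over $x$, $\alpha$, $f$ yields the bound with $\lambda_1$ of degree at most one, its constant term being the $\vec\delta$-contribution.

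Finally, substituting this estimate into the negligibility inequality and using that polynomials in $\poli_0 \subseteq \poly_1$ are monotone in each variable, I get
\[ \norm{R(\varphi,.)}_{K,m} \le \lambda\bigl(\norm{\varphi}_c,\ \lambda_1(\norm{\varphi}_c)\bigr) = \lambda'(\norm{\varphi}_c) \]
with $\lambda'(y_0) \coleq \lambda(y_0, \lambda_1(y_0)) \in \poly_0$, which is exactly the moderateness criterion of \Cref{emodnegchar}; hence $R \in \mode(\Omega)$. I do not anticipate a genuine obstacle — the construction mirrors \Cref{negmod} almost verbatim; the only point needing a little attention is that the translated support $x + \supp\varphi$ must stay inside one compact subset of $\Omega$ independently of $\varphi$ and of $x \in K$, which is precisely what the clause $\overline{B_r(K)} \csub \Omega$ built into \Cref{emodnegchar} provides.
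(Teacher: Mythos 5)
Your proof is correct and follows essentially the same route as the paper, which simply reduces the claim to the inequality $\norm{\starred\varphi - \vec\delta}_{K,c;B} \le \lambda_1(\norm{\varphi}_{c})$ and refers to the proof of \Cref{negmod} for it. You have merely written out that inequality explicitly (correctly using $\overline{B_r(K)} \csub \Omega$ to keep the translated supports in a fixed compact set) and performed the same substitution into the negligibility estimate.
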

\begin{proof}
The result follows from
\[ \norm{\starred\varphi - \vec\delta}_{K,c; B} \le \lambda_1 ( \norm{\varphi}_{c_1} ) \]
for suitable $\lambda_1$ and $c_1$, which is seen as in the proof of \Cref{negmod}.
\end{proof}

Similarly to \Cref{11} we have:

\begin{proposition}$\mode(\Omega)$ is a subalgebra of $\basice(\Omega)$ and $\nege(\Omega)$ is an ideal in $\mode(\Omega)$.
\end{proposition}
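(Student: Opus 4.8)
The plan is to read everything off the characterization in \Cref{emodnegchar}, the already-established inclusion $\nege(\Omega)\subseteq\mode(\Omega)$, and the Leibniz rule for the seminorms $\norm{\cdot}_{K,m}$. Recall that $\norm{fg}_{K,m}\le C(m)\norm{f}_{K,m}\norm{g}_{K,m}$ for a constant $C(m)$ depending only on $m$ and $n$, and that if $c\le c'$ then $\norm{\varphi}_c\le\norm{\varphi}_{c'}$, so that (since all $\lambda\in\poly_0$ and $\lambda\in\poli_0$ are monotone in their arguments) we may freely enlarge the order $c$ appearing in a test.

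First I would check that $\mode(\Omega)$ is a $\bC$-linear subspace and a subalgebra. For $R,S\in\mode(\Omega)$ and a fixed $K\csub\Omega$, pick $r>0$ with $\overline{B_r(K)}\csub\Omega$ that works simultaneously in the moderateness test of $R$ and of $S$; taking the minimum of the two radii is harmless, since $\overline{B_r(K)}\subseteq\overline{B_{r'}(K)}\csub\Omega$. For fixed $m\in\bN_0$ we then obtain $c_1,c_2\in\bN_0$ and $\lambda_1,\lambda_2\in\poly_0$; set $c\coleq\max(c_1,c_2)$. For $zR+S$ (with $z\in\bC$) one takes $\lambda\coleq\abso{z}\lambda_1+\lambda_2\in\poly_0$ and estimates $\norm{(zR+S)(\varphi,\cdot)}_{K,m}\le\abso{z}\lambda_1(\norm{\varphi}_c)+\lambda_2(\norm{\varphi}_c)=\lambda(\norm{\varphi}_c)$ for all $\varphi$ with $\supp\varphi\subseteq B_r(0)$. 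For the product one uses the Leibniz estimate to get $\norm{(RS)(\varphi,\cdot)}_{K,m}\le C(m)\lambda_1(\norm{\varphi}_c)\lambda_2(\norm{\varphi}_c)=\lambda(\norm{\varphi}_c)$ with $\lambda\coleq C(m)\lambda_1\lambda_2\in\poly_0$. Hence $\mode(\Omega)$ is a subalgebra of $\basice(\Omega)$.

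Next, for $\nege(\Omega)$: it is a subset of $\mode(\Omega)$ by the preceding proposition, it is a $\bC$-linear subspace by the very same argument as above with $\lambda\in\poli_0$ (which is closed under sums and $\bR^+$-scalar multiples), and to see it is an ideal it suffices, since $\nege(\Omega)\subseteq\mode(\Omega)$, to show $RS\in\nege(\Omega)$ whenever $R\in\mode(\Omega)$ and $S\in\nege(\Omega)$. Given $K\csub\Omega$, choose $r>0$ good for both the moderateness test of $R$ and the negligibility test of $S$; for fixed $m$ obtain $c_1,\lambda_1\in\poly_0$ from $R$ and $c_2,\lambda_2\in\poli_0$, $B\subseteq C^\infty(\Omega)$ bounded from $S$, and set $c\coleq\max(c_1,c_2)$. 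The Leibniz estimate then gives, for $\supp\varphi\subseteq B_r(0)$,
\[ \norm{(RS)(\varphi,\cdot)}_{K,m}\le C(m)\norm{R(\varphi,\cdot)}_{K,m}\norm{S(\varphi,\cdot)}_{K,m}\le C(m)\,\lambda_1(\norm{\varphi}_c)\,\lambda_2(\norm{\varphi}_c,\norm{\starred\varphi-\vec\delta}_{K,c;B}), \]
and $\lambda\coleq C(m)\lambda_1\lambda_2$ lies in $\poli_0$ because $\poli_0$ is an ideal in $\poly_0$. This is exactly the negligibility estimate for $RS$ with the same $r$ and $B$, so $RS\in\nege(\Omega)$.

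There is no real obstacle here: the content is entirely bookkeeping in matching the quantifier structure of \Cref{emodnegchar} (choosing a common radius as a minimum, enlarging $c$ to a maximum) together with two elementary algebraic facts — the Leibniz rule for $\norm{\cdot}_{K,m}$, whose constant is absorbed into the polynomial, and the semiring fact that $\poli_0$ is an ideal in $\poly_0$, which is precisely what makes $\nege(\Omega)$ absorb products by moderate elements. I would present the linear-subspace statements briefly and spend the writing on the two product estimates above.
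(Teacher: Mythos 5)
Your proof is correct and fills in exactly the routine verification that the paper leaves implicit (the paper gives no proof here, referring back to \Cref{11}, whose proof is simply ``evident from the definitions''). The two points you isolate --- absorbing the Leibniz constant into the polynomial bound after passing to a common radius $r$ and order $c$, and the semiring fact that $\poli_0$ is an ideal in $\poly_0$ so that $\nege(\Omega)$ absorbs products by $\mode(\Omega)$ --- are precisely the content of the statement, and your write-up matches the intended argument.
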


\begin{theorem}\label{eT1}Let $u \in \cD'(\Omega)$ and $f \in C^\infty(\Omega)$. Then
\begin{enumerate}[label=(\roman*)]
\item\label{eT1.1} $\iotae u$ is moderate,
\item\label{eT1.2} $\sigmae f$ is moderate,
\item\label{eT1.3} $\iotae f - \sigmae f$ is negligible, and
\item\label{eT1.4} if $\iotae u$ is negligible then $u=0$.
\end{enumerate}
\end{theorem}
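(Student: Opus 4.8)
The strategy is to transport each statement from the full algebra to the elementary one via the identity $(\iotae u)(\varphi,x) = \langle u, \varphi(\cdot - x)\rangle = (\iota u)(\starred\varphi)(x)$, so that the elementary kernels $\starred\varphi$ appear as a special subclass of the full kernels $\vec\varphi \in C^\infty(U,\cD_L(U))$, and then to use the simplified characterization in \Cref{emodnegchar}. The key observation for the transport is that if $\supp\varphi \subseteq B_r(0)$, then $\starred\varphi$ restricted to any $K \csub \Omega$ with $\overline{B_r(K)} \csub \Omega$ is an element of $C^\infty(K, \cD_L(\Omega))$ with $L = \overline{B_r(K)}$, and that the relevant seminorms convert cleanly: $\norm{\starred\varphi}_{K,c;L,l} = \norm{\varphi}_{c+l}$ (since $\pd_x^\alpha \pd_y^\beta (\varphi(y-x)) = (-1)^{|\alpha|}(\pd^{\alpha+\beta}\varphi)(y-x)$), and the $B$-seminorm $\norm{\starred\varphi - \vec\delta}_{K,c;B}$ is literally the same object in both settings.

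For \ref{eT1.1}, I would fix $K \csub \Omega$, use the continuity estimate $|\langle u,\psi\rangle| \le C\norm{\psi}_{L,l}$ for $\psi \in \cD_L(\Omega)$ (choosing $r>0$ with $L := \overline{B_r(K)} \csub \Omega$), and compute for $\varphi$ with $\supp\varphi \subseteq B_r(0)$ that $\norm{(\iotae u)(\varphi,\cdot)}_{K,m} = \sup_{x\in K,|\alpha|\le m}|\langle u, (-1)^{|\alpha|}(\pd^\alpha\varphi)(\cdot - x)\rangle| \le C\norm{\varphi}_{m+l}$, which is $\lambda(\norm{\varphi}_c)$ with $c = m+l$, $\lambda(y_0) = Cy_0$; by \Cref{emodnegchar} this gives moderateness. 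Item \ref{eT1.2} is immediate since $(\sigmae f)(\varphi,\cdot) = f$ on $\Omega$, so $\norm{(\sigmae f)(\varphi,\cdot)}_{K,m} = \norm{f}_{K,m}$, a constant polynomial $\lambda \in \poly_0$. For \ref{eT1.3}, I would note $(\iotae f - \sigmae f)(\varphi,x) = \langle f, \varphi(\cdot-x)\rangle - f(x) = \langle f, \starred\varphi(x) - \delta_x\rangle$; taking $B = \{\chi f\}$ for a cutoff $\chi \equiv 1$ near $L$, $\norm{(\iotae f - \sigmae f)(\varphi,\cdot)}_{K,m} \le \norm{\starred\varphi - \vec\delta}_{K,m;B}$, so the negligibility estimate of \Cref{emodnegchar} holds with $c=m$ and $\lambda(y_0,z_0) = z_0 \in \poli_0$.

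The main obstacle is \ref{eT1.4}, the reconstruction statement. I would adapt the argument of \Cref{T1}\ref{T1.4}: fix $x\in\Omega$, obtain (via \Cref{emodnegchar} applied to $K = \overline{B_s(x)}$ for small $s$, with $m=0$) some $r>0$, $c\in\bN_0$, $\lambda \in \poli_0$ with $\lambda(y_0,z_0) = \sum_{\alpha,\beta}\lambda_{\alpha\beta}y_0^\alpha z_0^\beta$ (here $\alpha \in \bN_0$, $\beta \in \bN$), and $B \subseteq C^\infty(\Omega)$ bounded. Then pick $\varphi \in \cD(\bR^n)$ with $\supp\varphi \subseteq B_1(0)$, $\int\varphi = 1$, and vanishing moments up to order $q$ chosen so that $\beta(q+1) > \alpha(n+c)$ for all $\alpha,\beta$ with $\lambda_{\alpha\beta}\ne 0$; set $\varphi_\e(y) = \e^{-n}\varphi(y/\e)$. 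For $\e < r$ we have $\supp\varphi_\e \subseteq B_r(0)$, so the negligibility estimate applies with $\norm{\varphi_\e}_c = O(\e^{-(n+c)})$ and $\norm{\starred{\varphi_\e} - \vec\delta}_{K,c;B} = O(\e^{q+1})$ (the standard mollifier estimates, verified directly), giving $\norm{(\iotae u)(\varphi_\e,\cdot)}_{K,0} \to 0$; since $(\iotae u)(\varphi_\e,\cdot) = u * \check\varphi_\e \to u$ in $\cD'$ on the interior of $K$, we conclude $u = 0$ near $x$, hence $u \equiv 0$. The only delicate points are bookkeeping the moment order $q$ against the polynomial $\lambda$ and confirming the two asymptotic rates, both of which are routine calculations entirely parallel to the full case.
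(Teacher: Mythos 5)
Your proposal is correct and is exactly the adaptation the paper has in mind: it states that the proof of \Cref{eT1} is ``almost identical to that of \Cref{T1} and hence omitted,'' and your argument reproduces that proof in the elementary setting (continuity estimate for (i), constant polynomial for (ii), the $B$-seminorm with $B=\{f\}$ or $\{\chi f\}$ for (iii), and the mollifier-with-vanishing-moments argument against the polynomial $\lambda\in\poli_0$ for (iv)), with the seminorm conversion $\norm{\starred\varphi}_{K,c;L,l}\le\norm{\varphi}_{c+l}$ doing the translation. No gaps.
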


The proof is almost identical to that of \Cref{T1} and hence omitted.

\begin{definition}We define the elementary Colombeau algebra of generalized functions on $\Omega$ by $\quotiente(\Omega) \coleq \mode(\Omega) / \nege(\Omega)$.
\end{definition}

As before, one may show that $\quotiente$ is a sheaf.

%
%
%
%
%
%

\section{Canonical mappings}

In this section we show that the algebra $\cG$ constructed above is near to being universal in the sense that there exist canonical mappings from it into most of the classical Colombeau algebras which are compactible with the embeddings.


We begin by constructing a mapping $\cG(\Omega) \to \quotiente(\Omega)$.

\begin{definition}
Given $R \in \cB(\Omega)$ we define $\widetilde R \in \basice(\Omega)$ by
\[ \widetilde R(\varphi, x) \coleq R ( \vec\varphi)(x)\qquad ((\varphi,x) \in U(\Omega) ) \]
where $\vec\varphi \in C^\infty(\Omega, \cD(\Omega))$ is chosen such that $\vec\varphi = \starred \varphi$ in a neighborhood of $x$.
\end{definition}

This definition is meaningful: given $(\varphi,x)$ in $U(\Omega)$ we have $\supp \varphi(.-x') \subseteq \Omega$ for $x'$ in a neighborhood $V$ of $x$. Choosing $\rho \in \cD(\Omega)$ with $\supp \rho \subseteq V$ and $\rho \equiv 1$ in a neighborhood of $x$, we can take $\vec\varphi(x) \coleq \rho \starred \varphi$. Obviously, $\widetilde R(\varphi,x)$ does not depend on the choice of $\vec\varphi(x)$ and $\widetilde R(\varphi, .)$ is smooth, so indeed we have $\widetilde R \in \basice(\Omega)$.

\begin{proposition}\label{32}Let $R \in \cB(\Omega)$. Then the following holds:
 \begin{enumerate}[label=(\roman*)]
  \item\label{32.1} $\widetilde{\left.\iota u\right.} = \iotae u$ for $u \in \cD'(\Omega)$.
  \item\label{32.2} $\widetilde{\left.\sigma f\right.} = \sigmae f$ for $f \in C^\infty(\Omega)$.
  \item\label{32.3} $\widetilde R \in \mode(\Omega)$ for $R \in \cM(\Omega)$.
  \item\label{32.4} $\widetilde R \in \nege(\Omega)$ for $R \in \cN(\Omega)$.
 \end{enumerate}
\end{proposition}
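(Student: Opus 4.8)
The four parts split into two easy identities and two estimates. For \ref{32.1} and \ref{32.2} I would simply unwind the definitions: given $(\varphi,x)\in U(\Omega)$, pick $\vec\varphi\in C^\infty(\Omega,\cD(\Omega))$ with $\vec\varphi=\starred\varphi$ near $x$, so that $\vec\varphi(x)(y)=\varphi(y-x)$. Then $\widetilde{\iota u}(\varphi,x)=(\iota u)(\vec\varphi)(x)=\langle u,\vec\varphi(x)\rangle=\langle u,\varphi(\cdot-x)\rangle=(\iotae u)(\varphi,x)$, and likewise $\widetilde{\sigma f}(\varphi,x)=(\sigma f)(\vec\varphi)(x)=f(x)=(\sigmae f)(\varphi,x)$. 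These are one-line verifications.

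The substance is in \ref{32.3} and \ref{32.4}, where I want to feed the test for $\widetilde R$ into the test for $R$. I would use the simplified characterization of $\mode$/$\nege$ from \Cref{emodnegchar}: fix $K\csub\Omega$, and apply moderateness of $R$ (in the form of \Cref{prop_simpl} or \Cref{def_mod}) to produce, for each point of $K$, a neighborhood $U$; cover $K$ by finitely many such, pass to a common radius $r>0$ with the relevant balls sitting inside the chosen $U$'s, and set $L\coleq\overline{B_r(K)}$. Now given $\varphi\in\cD(\bR^n)$ with $\supp\varphi\subseteq B_r(0)$, the kernel $\starred\varphi$ restricted to a neighborhood of $K$ is an element of $C^\infty(K,\cD_L(U))$ (since $x+\supp\varphi\subseteq B_r(K)\subseteq L$ for $x$ near $K$), and by the very definition of $\widetilde R$ we have $\widetilde R(\varphi,\cdot)=R(\starred\varphi)$ on a neighborhood of $K$, using the sheaf/locality property of $R\in\cB(\Omega)$ to patch the locally-defined $\vec\varphi$. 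Here only the $k=0$ term $R(\vec\varphi_0)$ of the $\mode$/$\nege$ test is relevant, since $\widetilde R$ has no higher differentials built into it. Feeding $\vec\varphi_0=\starred\varphi$ into \Cref{def_mod} gives
\[
\norm{\widetilde R(\varphi,\cdot)}_{K,m}=\norm{R(\starred\varphi)}_{K,m}\le\lambda(\norm{\starred\varphi}_{K,c;L,l}),
\]
so it remains to bound $\norm{\starred\varphi}_{K,c;L,l}$ by a polynomial in $\norm{\varphi}_{c'}$ for suitable $c'$. This is immediate: $\pd_x^\alpha\pd_y^\beta\varphi(y-x)=(-1)^{|\alpha|}(\pd^{\alpha+\beta}\varphi)(y-x)$, so $\norm{\starred\varphi}_{K,c;L,l}\le\norm{\varphi}_{c+l}$, and composing with $\lambda$ gives a $\lambda'\in\poly_0$ in $\norm{\varphi}_{c+l}$, which is exactly the moderateness test of \Cref{emodnegchar}. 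For \ref{32.4} the argument is identical, except one carries along the negligibility term: the $\nege$ test for $R$ gives $B\subseteq C^\infty(U)$ bounded and $\lambda\in\poli_0$ with
\[
\norm{R(\starred\varphi)}_{K,m}\le\lambda(\norm{\starred\varphi}_{K,c;L,l},\norm{\starred\varphi-\vec\delta}_{K,c;B}),
\]
and the term $\norm{\starred\varphi-\vec\delta}_{K,c;B}$ appearing here is literally the same quantity as in the $\nege$ test of \Cref{emodnegchar}; combined with $\norm{\starred\varphi}_{K,c;L,l}\le\norm{\varphi}_{c+l}$ this produces the required $\lambda'\in\poli_0$ in the two variables $\norm{\varphi}_{c+l}$ and $\norm{\starred\varphi-\vec\delta}_{K,c;B}$.

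\textbf{Main obstacle.} The only delicate point is the bookkeeping around locality: one must justify that $\widetilde R(\varphi,\cdot)$ agrees with $R(\starred\varphi)$ on a full neighborhood of $K$ uniformly in $\varphi$ (not just pointwise in $x$), so that the seminorm $\norm{\cdot}_{K,m}$ can be applied. This is handled by noting that for $\supp\varphi\subseteq B_r(0)$ a single cutoff $\rho\in\cD(\Omega)$ with $\rho\equiv1$ near $K$ and $\supp\rho$ small enough works simultaneously for all such $\varphi$, so $\vec\varphi\coleq\rho\cdot\starred\varphi\in C^\infty(\Omega,\cD(\Omega))$ coincides with $\starred\varphi$ near $K$ and $R(\vec\varphi)|_{\text{nbhd of }K}=\widetilde R(\varphi,\cdot)|_{\text{nbhd of }K}$ by the locality clause in the definition of $\cB(\Omega)$. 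Everything else is the routine chain-rule estimate for $\starred\varphi$ and substitution into a polynomial, which I would not spell out in full.
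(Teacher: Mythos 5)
Your proposal is correct and follows essentially the same route as the paper: reduce to the $k=0$ case of the moderateness/negligibility test for $R$ applied to $\starred\varphi$ (viewed as a germ near $K$ with values in $\cD_{L'}$), and conclude via the elementary estimate $\norm{\starred\varphi}_{K,c;L',l}\le\norm{\varphi}_{c+l}$. The only organizational difference is that the paper verifies the local form of \Cref{def_emodneg} directly (fixing $x$, taking $U$ from \Cref{prop_simpl} and inserting an intermediate $L'$ with $L\csub L'\csub U$ to absorb the fact that $x+\supp\varphi$ leaves $L$ for $x$ slightly off $K$), whereas you target \Cref{emodnegchar} and redo its covering argument — your ``$x+\supp\varphi\subseteq B_r(K)$ for $x$ near $K$'' needs exactly that small enlargement of $L$, but this is the same wrinkle and is trivially repaired.
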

\begin{proof}
\ref{32.1}: For $u \in \cD'(\Omega)$ we have
\[ \widetilde{\left.\iota u\right.}(\varphi,x) = (\iota u)(\vec\varphi)(x) = \langle u, \vec\varphi(x) \rangle = \langle u, \starred \varphi(x) \rangle = \langle u(y), \varphi(y-x) \rangle = (\iotae u)(\varphi, x). \]
\ref{32.2} is clear.

\ref{32.3}: Suppose that $R \in \cM(\Omega)$. Fixing $x \in \Omega$, we obtain $U$ as in \Cref{prop_simpl}. Let $K \csub L \csub U$ and $m$ be given, set $k=0$, and choose $L'$ such that $L \csub L' \csub U$. Then \Cref{prop_simpl} gives $c,l,\lambda$ such that for $\vec\varphi \in C^\infty(K, \cD_{L'}(U))$,
\[ \norm{R(\vec\varphi)}_{K,m} \le \lambda ( \norm{\vec\varphi}_{K,c; L', l} ). \]
Now for $\varphi \in \cD(\bR^n)$ with $K + \supp \varphi \subseteq L$ we have $\starred \varphi \in C^\infty(K, \cD_{L'}(U))$, which gives
\[ \norm{\widetilde R(\varphi,.)}_{K,m} = \norm{R(\starred \varphi)}_{K,m} \le \lambda ( \norm{ \starred \varphi }_{K,c; L', l}) \le \lambda ( \norm{\varphi}_{c+l} ) \]
which proves that $\widetilde R \in \mode(\Omega)$.

\ref{32.4}: Similarly, if $R \in \cN(\Omega)$ then for $x \in \Omega$ we have $U$ as in \Cref{prop_simpl}. For $K \csub L \csub U$, $m$ given, $k=0$, and $L'$ such that $L \csub L' \csub U$, we obtain $c,l,\lambda, B$ as in \Cref{prop_simpl} such that
\[ \norm{ R(\vec\varphi) }_{K,m} \le \lambda ( \norm{\vec\varphi}_{K,c; L', l}, \norm{\vec\varphi - \vec \delta}_{K,c; B} ) \]
and hence
\begin{align*}
 \norm{\widetilde R(\varphi, .)}_{K,m} & = \norm{ R ( \starred \varphi)}_{K,m} \\
& \le \lambda ( \norm{ \starred \varphi}_{K,c; L', l}, \norm{\starred \varphi - \vec\delta}_{K,c; B}) \\
&\le \lambda ( \norm { \varphi }_{c+l}, \norm{\starred \varphi - \vec\delta}_{K,c; B} ).
\end{align*}
which gives negligibility of $\widetilde R$.
\end{proof}

\subsection{The special algebra}

We define the special Colombeau algebra $\cG^s$ with the embedding as in \cite{zbMATH06172905}: fix a mollifier $\rho \in \cS(\bR^n)$ with
\[ \int \rho(x) \,\ud x = 1, \qquad \int x^\alpha \rho(x)\,\ud x = 0\qquad \forall \alpha \in \bN_0^n \setminus \{0\}. \]
Choosing $\chi \in \cD(\bR^n)$ with $0 \le \chi \le 1$, $\chi \equiv 1$ on $B_1(0)$ and $\supp \chi \subseteq B_2(0)$ we set
\[ \rho_\e(y) \coleq \e^{-n} \rho(y/\e),\quad \theta_\e(y) \coleq \rho_\e(y) \chi ( y \abso{\ln \e} ) \qquad (\e>0). \]
Moreover, with
\[ K_\e = \{ x \in \Omega\ |\ d(x, \bR^n \setminus \Omega) \ge \e \} \cap B_{1/\e}(0) \csub \Omega \qquad (\e>0) \]
we choose functions $\kappa_\e \in \cD(\Omega)$ such that $0 \le \kappa_\e \le 1$ and $\kappa_\e \equiv 1$ on $K_\e$. Then the special algebra $\cG^s(\Omega)$ is given by
\begin{align*}
 \cE^s(\Omega) &\coleq C^\infty(\Omega)^I\textrm{ with }I \coleq (0,1], \\
\cE^s_M(\Omega) &\coleq \{ (u_\e)_\e \in \cE^s(\Omega)\ |\ \forall K \csub \Omega\ \forall m\in \bN_0\ \exists N \in \bN: \norm{u_\e}_{K,m} = O(\e^{-N}) \}, \\
\cN^s(\Omega) &\coleq \{ (u_\e)_\e \in \cE^s(\Omega)\ |\ \forall K \csub \Omega\ \forall m\in \bN_0\ \forall N \in \bN: \norm{u_\e}_{K,m} = O(\e^N) \}, \\
\cG^s(\Omega) & \coleq \cE^s_M(\Omega) / \cN^s(\Omega), \\
(\iota^s u)_\e &\coleq \langle u, \vec\psi_\e \rangle \qquad (u \in \cD'(\Omega)), \\
(\sigma^s f)_\e &\coleq f \qquad \qquad (f \in C^\infty(\Omega)), \\
\vec\psi_\e(x)(y) &\coleq \theta_\e(x-y) \kappa_\e(y).
\end{align*}

\begin{definition}For $R \in \cB(\Omega)$ we define $R^s = (R^s_\e)_\e \in \cE^s(\Omega)$ by
 \[ R^s_\e (x) \coleq R ( \vec\psi_\e ) (x). \]
\end{definition}

\begin{proposition}\label{35}
 \begin{enumerate}[label=(\roman*)]
  \item\label{35.1} $(\iota u)^s = \iota^s u$ for $u \in \cD'(\Omega)$.
  \item\label{35.2} $(\sigma f)^s = \sigma^s f$ for $f \in C^\infty(\Omega)$.
  \item\label{35.3} $R^s \in \cE^s_M(\Omega)$ for $R \in \cM(\Omega)$.
  \item\label{35.4} $R^s \in \cN^s(\Omega)$ for $R \in \cN(\Omega)$.
 \end{enumerate}
\end{proposition}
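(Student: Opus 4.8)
The plan is as follows. Parts \ref{35.1} and \ref{35.2} are immediate from the definitions: $(\iota u)^s_\e(x)=(\iota u)(\vec\psi_\e)(x)=\langle u,\vec\psi_\e(x)\rangle=(\iota^su)_\e(x)$ and $(\sigma f)^s_\e(x)=(\sigma f)(\vec\psi_\e)(x)=f(x)=(\sigma^sf)_\e(x)$. For \ref{35.3} and \ref{35.4} the strategy mirrors the proof of \Cref{32}: reduce to the localized tests of \Cref{prop_simpl} with $k=0$, after observing that on a fixed compact set and for small $\e$ the kernel $\vec\psi_\e$ is an honest convolution kernel. Concretely, I fix $K\csub\Omega$ and $m\in\bN_0$, cover $K$ by finitely many neighbourhoods $U_1,\dots,U_p$ obtained from the moderateness (for \ref{35.3}) or negligibility (for \ref{35.4}) of $R$, write $K=\bigcup_iK_i$ with $K_i\csub U_i$, choose $r_i>0$ with $L_i\coleq\overline{B_{r_i}(K_i)}\csub U_i$, and set $V_i\coleq B_{r_i/2}(K_i)$. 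Since $\supp\theta_\e\subseteq\overline{B_{2/\abso{\ln\e}}(0)}$ shrinks to $\{0\}$ while the cutoffs $\kappa_\e$ equal $1$ on the sets $K_\e$, which exhaust $\Omega$ as $\e\to0$, for all sufficiently small $\e$ and every $x\in V_i$ one has $\vec\psi_\e(x)(y)=\theta_\e(x-y)\kappa_\e(y)=\theta_\e(x-y)$ for all $y$ (wherever $\theta_\e(x-y)\ne0$ we have $\abso{x-y}\le2/\abso{\ln\e}$, hence $y\in L_i\subseteq K_\e$, so $\kappa_\e(y)=1$), and $\supp\vec\psi_\e(x)\subseteq L_i$; thus $\vec\psi_\e|_{V_i}$ represents an element of $C^\infty(K_i,\cD_{L_i}(U_i))$, admissible for the localized tests.

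For \ref{35.3}, \Cref{prop_simpl} (moderateness, $k=0$) applied on $(K_i,L_i,m)$ gives $c,l\in\bN_0$ and $\lambda\in\poly_0$ with $\norm{R(\vec\varphi)}_{K_i,m}\le\lambda(\norm{\vec\varphi}_{K_i,c;L_i,l})$, so, using the sheaf property of $R$ to pass from $\vec\psi_\e$ to $\vec\psi_\e|_{V_i}$ and the identity $\pd_x^\alpha\pd_y^\beta\theta_\e(x-y)=(-1)^{\abso\beta}(\pd^{\alpha+\beta}\theta_\e)(x-y)$, I obtain
\[ \norm{R^s_\e}_{K_i,m}=\norm{R(\vec\psi_\e)}_{K_i,m}\le\lambda(\norm{\vec\psi_\e}_{K_i,c;L_i,l})\le\lambda(\norm{\theta_\e}_{c+l}) \]
for small $\e$. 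Combined with the elementary bound $\norm{\theta_\e}_{c+l}=O(\e^{-N'})$ (Leibniz rule on $\rho_\e(\,\cdot\,)\,\chi(\,\cdot\,\abso{\ln\e})$, absorbing powers of $\abso{\ln\e}$ into a power of $\e$) and $\lambda\in\poly_0=\bR^+[y_0]$, this yields $\norm{R^s_\e}_{K_i,m}=O(\e^{-N_i})$. Since only the regime $\e\to0^+$ matters, taking $N=\max_iN_i$ shows $R^s\in\cE^s_M(\Omega)$.

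For \ref{35.4}, fix additionally a target $N\in\bN$. Now \Cref{prop_simpl} (negligibility, $k=0$) gives $c,l$, $\lambda\in\poli_0$ and a bounded set $B\subseteq C^\infty(U_i)$ with $\norm{R(\vec\varphi)}_{K_i,m}\le\lambda(\norm{\vec\varphi}_{K_i,c;L_i,l},\norm{\vec\varphi-\vec\delta}_{K_i,c;B})$. Evaluated at $\vec\varphi=\vec\psi_\e|_{V_i}$, the first argument is $O(\e^{-P})$ as above, while
\[ \norm{\vec\psi_\e-\vec\delta}_{K_i,c;B}=\sup_{x\in K_i,\abso\alpha\le c,f\in B}\abso{((\pd^\alpha f)*\theta_\e)(x)-(\pd^\alpha f)(x)}=O(\e^Q)\quad(\forall\,Q\in\bN), \]
uniformly over $B$, by Taylor expansion of $\pd^\alpha f$, using that $\int\theta_\e\to1$ and $\int z^\gamma\theta_\e(z)\,\ud z\to0$ for $0<\abso\gamma$, in both cases faster than any power of $\e$ (the vanishing moments of $\rho$ control the bulk, the Schwartz decay of $\rho$ the tail cut off by $\chi(\,\cdot\,\abso{\ln\e})$); these are exactly the estimates underlying the embedding $\iota^s$ of \cite{zbMATH06172905}. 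Writing $\lambda(y_0,z_0)=z_0\,\mu(y_0,z_0)$ with $\mu$ a polynomial (possible since $\lambda\in\poli_0$), I get $\norm{R^s_\e}_{K_i,m}\le O(\e^Q)\cdot\mu(O(\e^{-P}),O(1))=O(\e^{Q-P'})$ with $P'$ independent of $Q$; choosing $Q=N+P'$ for each $i$ and passing to the smallest $\e$-threshold yields $\norm{R^s_\e}_{K,m}=O(\e^N)$, i.e.\ $R^s\in\cN^s(\Omega)$.

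The analytic inputs---polynomial blow-up of $\norm{\theta_\e}_c$, and super-polynomial decay of $(\pd^\alpha f)*\theta_\e-\pd^\alpha f$ on compact sets, uniformly over bounded families of $f$---are the standard facts about this mollifier and may be quoted from \cite{zbMATH06172905}. The one genuinely non-formal point, and the main thing to get right, is the identification $\vec\psi_\e(x)=\theta_\e(x-\,\cdot\,)$ near each $K_i$ for small $\e$, together with the verification that this restricted kernel is supported in a single fixed compact $L_i$ and is therefore admissible for the \emph{localized} moderateness and negligibility conditions of \Cref{prop_simpl}; this is precisely where the shrinking of $\supp\theta_\e$ and the exhaustion of $\Omega$ by the $K_\e$ come in.
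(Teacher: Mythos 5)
Your proposal is correct and follows essentially the same route as the paper: reduce to the localized tests of \Cref{prop_simpl} with $k=0$, observe that $\vec\psi_\e$ is an admissible kernel in $C^\infty(K,\cD_L(U))$ for small $\e$, and conclude from $\norm{\vec\psi_\e}_{K,c;L,l}=O(\e^{-N})$ together with $\norm{\vec\psi_\e-\vec\delta}_{K,c;B}=O(\e^N)$ for all $N$ (the latter quoted from \cite{zbMATH06172905}, exactly as the paper does). Your explicit covering of $K$ and the observation that $\kappa_\e\equiv1$ near the relevant compacta (so that $\vec\psi_\e$ is a genuine convolution kernel there) merely spell out details the paper leaves implicit in its $O(\e^{-n-c-l})$ computation.
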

\begin{proof}
\ref{35.1} and \ref{35.2} are clear.

For \ref{35.3} it suffices to show the needed estimate locally. Fix $x \in \Omega$, which gives $U \in \cU_x(\Omega)$ as in \Cref{prop_simpl}. Choose any $K,L$ such that $x \in K \csub L \csub U$, fix $m$, and set $k=0$. Then there are $c,l,\lambda$ as in \Cref{prop_simpl}. Because $\supp \vec\psi_\e(x) \subseteq B_{2 \abso{\ln \e}^{-1}}(x)$ we have $\vec\psi_\e \in C^\infty(K, \cD_L(U))$ for $\e$ small enough, which gives
\[ \norm{R^s_\e}_{K,m} \le \lambda ( \norm{ \vec\psi_\e }_{K,c; L, l} ). \]
Consequently, $(R^s_\e)_\e \in \cE^s_M(\Omega)$ follows from
\[ \norm{\vec\psi_\e}_{K,c; L, l} = \sup_{x, \alpha, y, \beta} \abso{ \pd_x^\alpha \pd_y^\beta \bigl( \rho_\e(x-y) \chi ( ( x-y) \abso{\ln \e}) \kappa_\e(y)\bigr) } = O(\e^{-n-c-l}). \]
For negligibility we proceed similarly; the claim then follows by using that for a bounded subset $B \subseteq C^\infty(U)$ we have $\norm{\vec\psi_\e - \vec\delta}_{K,c; B} = O(\e^N)$ for all $N \in \bN$, which is seen as in \cite[Prop.~12, p.~38]{zbMATH06172905} and actually merely a restatement of the fact that $\iota^s f - \sigma^s f = O(\e^N)$ for all $N$ uniformly for $f \in B$.
\end{proof}

\subsection{The diffeomorphism invariant algebra}

There are several variants of the diffeomorphism invariant algebra $\cG^d$; we will employ the following formulation \cite{papernew, bigone, specfull}:
\begin{align*}
 \cE^d(\Omega) &\coleq C^\infty( \cD(\Omega), C^\infty(\Omega)) \\
 \cE_M^d(\Omega) &\coleq \{ R \in C^\infty(\cD(\Omega))\ |\ \forall K \csub \Omega\ \forall k,m \in \bN_0\ \forall (\vec\varphi_\e)_\e \in S(\Omega)\ \forall (\vec\psi_{1,\e})_\e, \dotsc,\\
& \qquad (\vec\psi_{k,\e})_\e \in S^0(\Omega)\ \exists N \in \bN: \norm{ \ud^k R(\vec\varphi_\e)(\vec\psi_{1,\e}, \dotsc, \vec\psi_{k,\e} )}_{K,m} = O(\e^{-N} ) \}, \\
 \cN^d(\Omega) & \coleq \{ R \in C^\infty(\cD(\Omega))\ |\ \forall K \csub \Omega\ \forall k,m \in \bN_0\ \forall (\vec\varphi_\e)_\e \in S(\Omega)\ \forall (\vec\psi_{1,\e})_\e, \dotsc,\\
& \qquad (\vec\psi_{k,\e})_\e \in S^0(\Omega)\ \forall N \in \bN: \norm{ \ud^k R(\vec\varphi_\e)(\vec\psi_{1,\e}, \dotsc, \vec\psi_{k,\e} )}_{K,m} = O(\e^N ) \}, \\
 \cG^d(\Omega) & \coleq \cE_M^d(\Omega) / \cN^d(\Omega), \\
 (\iota^d u) (\varphi)(x) & \coleq \langle u, \varphi \rangle, \\
 (\sigma^d f)(\varphi)(x) & \coleq f(x).
\end{align*}

The spaces $S(\Omega)$ and $S^0(\Omega)$ employed in this definition are given as follows:

\begin{definition}\label{def_testobj} Let a net of smoothing kernels $(\vec\varphi_\e)_\e \in C^\infty(\Omega, \cD(\Omega))^I$ be given and denote the corresponding net of smoothing operators by $(\Phi_\e)_\e \in \cL ( \cD'(\Omega), C^\infty(\Omega))^I$. Then $(\varphi_\e)_\e$ is called a \emph{test object} on $\Omega$ if
 \begin{enumerate}[label=(\roman*)]
  \item\label{def_testobj.1} $\Phi_\e \to \id$ in $\cL(\cD'(\Omega), \cD'(\Omega))$,
  \item\label{def_testobj.2} $\forall p \in \csn ( \cL(\cD'(\Omega), C^\infty(\Omega)) )$ $\exists N \in \bN$: $p ( \Phi_\e ) = O (\e^{-N})$,
  \item\label{def_testobj.3} $\forall p \in \csn ( \cL(C^\infty(\Omega), C^\infty(\Omega)))$ $\forall m \in \bN$: $p( \Phi_\e|_{C^\infty(\Omega)} - \id ) = O(\e^m)$,
  \item\label{def_testobj.4} $\forall x \in \Omega$ $\exists V \in \cU_x(\Omega)$ $\forall r>0$ $\exists \e_0>0$ $\forall y \in V$ $\forall \e < \e_0$: $\supp \varphi_\e(y) \subseteq B_r(y)$.
 \end{enumerate}
We denote the set of test objects on $\Omega$ by $S(\Omega)$. Similarly, $(\vec\varphi_\e)_\e$ is called a $0$-test object if it satisfies these conditions with \ref{def_testobj.1} and \ref{def_testobj.3} replaced by the following conditions:
\begin{enumerate}[label=(\roman*')]
 \item\label{def_testobj.1p}  $\Phi_\e \to 0$ in $\cL(\cD'(\Omega), \cD'(\Omega))$,\addtocounter{enumi}{1}
 \item\label{def_testobj.3p} $\forall p \in \csn ( \cL(C^\infty(\Omega), C^\infty(\Omega)))$ $\forall m \in \bN$: $p( \Phi_\e|_{C^\infty(\Omega)} ) = O(\e^m)$.
\end{enumerate}
The set of all $0$-test objects on $\Omega$ is denoted by $S^0(\Omega)$.
\end{definition} 

\begin{definition}
 For $R \in \cB(\Omega)$ we define $R^d \in \cE^d(\Omega)$ by
 \[ R^d(\varphi)(x) \coleq R ( [x' \mapsto \varphi ] ) (x).  \]
\end{definition}

\begin{proposition}\label{37}
 \begin{enumerate}[label=(\roman*)]
  \item\label{37.1} $(\iota u)^d = \iota^d u$ for $u \in \cD'(\Omega)$.
  \item\label{37.2} $(\sigma f)^d = \sigma^d u$ for $f \in C^\infty(\Omega)$.
  \item\label{37.3} $R^d \in \cE^d_M(\Omega)$ for $R \in \cM(\Omega)$.
  \item\label{37.4} $R^d \in \cN^d(\Omega)$ for $R \in \cN(\Omega)$.
 \end{enumerate}
\end{proposition}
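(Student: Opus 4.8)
The plan is to dispatch \ref{37.1} and \ref{37.2} by unwinding the definitions and to obtain \ref{37.3}, \ref{37.4} by localizing and reducing the estimates defining $\cE_M^d(\Omega)$ and $\cN^d(\Omega)$ to the characterizations of moderateness and negligibility of $R$ in \Cref{prop_simpl}. For \ref{37.1}, since $[x'\mapsto\varphi]$ is the constant kernel with value $\varphi$, one has $(\iota u)^d(\varphi)(x)=(\iota u)([x'\mapsto\varphi])(x)=\langle u,\varphi\rangle=(\iota^d u)(\varphi)(x)$, and \ref{37.2} is the analogous computation for $\sigma$; both use nothing beyond the definitions.

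For \ref{37.3} and \ref{37.4} I would fix $K\csub\Omega$, $k,m\in\bN_0$, a test object $(\vec\varphi_\e)_\e$ with smoothing operators $(\Phi_\e)_\e$, and $0$-test objects $(\vec\psi_{1,\e})_\e,\dotsc,(\vec\psi_{k,\e})_\e$ with smoothing operators $(\Psi_{j,\e})_\e$, and first unwind how an element of $\cE^d(\Omega)$ is evaluated on a net of smoothing kernels. Because $R^d(\varphi)(x)=R([x'\mapsto\varphi])(x)$ and $\varphi\mapsto[x'\mapsto\varphi]$ is linear, the chain rule, together with the identity $\pd^\gamma(S(\vec\eta))=(\widetilde\D^\gamma S)(\vec\eta)$ for $S\in\cB(\Omega)$ (with $\widetilde\D^\gamma$ the corresponding iterated coordinate derivative), exhibits $\norm{\ud^k R^d(\vec\varphi_\e)(\vec\psi_{1,\e},\dotsc,\vec\psi_{k,\e})}_{K,m}$ as a finite sum of terms
\[ \sup_{x\in K}\abso{\,\ud^p(\widetilde\D^\gamma R)\bigl([x'\mapsto\vec\varphi_\e(x)]\bigr)\bigl([x'\mapsto\vec\zeta_1(x)],\dotsc,[x'\mapsto\vec\zeta_p(x)]\bigr)(x)\,}, \]
with $k\le p\le k+m$, $|\gamma|\le m$, and each $\vec\zeta_i$ a partial derivative of order $\le m$ of one of $\vec\varphi_\e$ (for the $p-k$ slots produced by differentiating the base argument; of order $\ge1$) or of one of the $\vec\psi_{j,\e}$. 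By the theorem that $\widetilde\D_X$ maps $\cM(\Omega)$ and $\cN(\Omega)$ into themselves, every $\widetilde\D^\gamma R$ occurring here is again moderate, resp.\ negligible. I would then cover $K$ by finitely many open sets $U_i$ on which the (finitely many) characterizations of \Cref{prop_simpl} hold, write $K=\bigcup_iK_i$ with $K_i\csub L_i\csub U_i$, and use condition~\ref{def_testobj.4} of \Cref{def_testobj} (which $0$-test objects also satisfy) to obtain $\e_0>0$ such that for $\e<\e_0$ the supports of $\vec\varphi_\e(x)$ and of the $\vec\psi_{j,\e}(x)$ stay inside $L_i$ whenever $x\in K_i$; then the constant kernels above, restricted to $U_i$, lie in $C^\infty(U_i,\cD_{L_i}(U_i))$, and since $x\in K_i$ the value at $x$ is $\le\norm{\cdot}_{K_i,0}$, so \Cref{prop_simpl} is applicable.

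For \ref{37.3} each term is then bounded by a polynomial $\lambda_i\in\poly_p$ evaluated at the seminorms $\norm{[x'\mapsto\vec\varphi_\e(x)]}_{K_i,c_i;L_i,l_i}=\norm{\vec\varphi_\e(x)}_{l_i}$ and $\norm{[x'\mapsto\vec\zeta_i(x)]}_{K_i,c_i;L_i,l_i}$; taking $\sup_{x\in K_i}$, each of these is dominated by a continuous seminorm of $\Phi_\e$, resp.\ $\Psi_{j,\e}$, in $\cL(\cD'(\Omega),C^\infty(\Omega))$ — test the operator against the bounded set $\{\pd^\beta\delta_y:y\in L_i,\ |\beta|\le l_i+m\}$ — hence is $O(\e^{-N})$ for some $N$ by condition~\ref{def_testobj.2} of \Cref{def_testobj}; substituting and summing gives $R^d\in\cE_M^d(\Omega)$. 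For \ref{37.4} one has in addition a bounded set $B_i\subseteq C^\infty(U_i)$ and $\lambda_i\in\poli_p$, and must show that the extra factors $\norm{[x'\mapsto\vec\varphi_\e(x)]-\vec\delta}_{K_i,c_i;B_i}$ and $\norm{[x'\mapsto\vec\zeta_i(x)]}_{K_i,c_i;B_i}$ are $O(\e^N)$ for every $N$. Here I would use the identities $\norm{\vec\eta-\vec\delta}_{K,c;B}=\sup_{f\in B}\norm{\Phi_{\vec\eta}|_{C^\infty(\Omega)}f-f}_{K,c}$ and $\norm{\vec\eta}_{K,c;B}=\sup_{f\in B}\norm{\Phi_{\vec\eta}|_{C^\infty(\Omega)}f}_{K,c}$ (with $\Phi_{\vec\eta}$ the smoothing operator of $\vec\eta$) and, keeping the $x$-differentiations of the kernel value and of the evaluation point coupled along the diagonal, reduce these factors to (derivatives of) $\Phi_\e|_{C^\infty(\Omega)}-\id$ and of $\Psi_{j,\e}|_{C^\infty(\Omega)}$, which are $O(\e^N)$ for every $N$ by conditions~\ref{def_testobj.3} and \ref{def_testobj.3p} of \Cref{def_testobj}; since every monomial of $\lambda_i\in\poli_p$ carries at least one such factor, the whole sum is $O(\e^N)$ for every $N$, i.e.\ $R^d\in\cN^d(\Omega)$.

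I expect the main obstacle to be precisely this last point of \ref{37.4}. Because $R^d$ feeds only \emph{constant} kernels into $R$, differentiating the base argument produces auxiliary kernel slots built from derivatives of the \emph{test object} $\vec\varphi_\e$ — not of the $0$-test objects — whose $B$-seminorms do not decay; the negligibility estimate can therefore not be closed by bounding the individual terms of the chain-rule expansion in isolation. One has to keep these terms coupled, along the diagonal, with the comparison with $\vec\delta$ built into the negligibility test of $R$, so that they reassemble into (derivatives of) $\Phi_\e|_{C^\infty(\Omega)}-\id$, which condition~\ref{def_testobj.3} of \Cref{def_testobj} makes $O(\e^N)$ for every $N$. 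By contrast, \ref{37.3} is routine once the localization and condition~\ref{def_testobj.2} are in place, and \ref{37.1}, \ref{37.2} are immediate.
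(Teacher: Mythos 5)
Parts \ref{37.1} and \ref{37.2} are fine, and your treatment of \ref{37.3} --- localizing via condition \ref{def_testobj.4} of \Cref{def_testobj}, expanding $\pd_x^\alpha$ of the diagonal evaluation by the chain rule into terms of the form $\ud^{p}(\widetilde\D^\gamma R)$ fed with constant kernels, and bounding the resulting $\norm{\cdot}_{K,c;L,l}$-seminorms by operator seminorms of $\Phi_\e$, $\Psi_{j,\e}$ --- is a correct and considerably more explicit version of the paper's one-line argument for moderateness.

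The gap is in \ref{37.4}, at exactly the step you flag as the main obstacle and then only assert. The claimed reassembly of the chain-rule terms into derivatives of $\Phi_\e|_{C^\infty(\Omega)}-\id$ does not take place, and no bookkeeping can produce it, because the definition of $R^d$ forces $R$ to be evaluated on \emph{constant} kernels, and a constant kernel $[x'\mapsto\vec\varphi_\e(x)]$ is close to $\vec\delta$ only in the seminorms $\norm{\cdot-\vec\delta}_{K,c;B}$ with $c=0$ and only at the single point $x'=x$: for $c\ge1$ the comparison $\pd_{x'}^{\alpha}\bigl(\langle f,\vec\varphi_\e(x)\rangle-f(x')\bigr)=-\pd^{\alpha}f(x')+o(1)$ does not decay, while the negligibility test of $R$ may genuinely require $c\ge1$. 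Concretely, take $\Omega=\bR$, $f(x)=x$ and $R\coleq\widetilde\D_X(\iota f-\sigma f)$ with $X\equiv 1$; then $R\in\cN(\Omega)$ by \Cref{T1}\,\ref{T1.3} together with the invariance of $\cN(\Omega)$ under $\widetilde\D_X$, yet $R([x'\mapsto\varphi])(x)=\pd_{x'}\bigl(\langle f,\varphi\rangle-f(x')\bigr)\big|_{x'=x}=-1$ for every $\varphi$, so $R^d=-\sigma^d(1)$ and $\norm{R^d(\vec\varphi_\e)}_{K,0}=1$ for all $\e$. The failure thus occurs already at $k=m=0$, before any chain rule, so the difficulty is not confined to the slots produced by differentiating the base argument.

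For comparison: the paper's proof is a one-liner that estimates $\norm{\ud^kR(\vec\varphi_\e)(\vec\psi_{1,\e},\dotsc)}_{K,m}$ with $\ud^k$ the differential of the original $R\in\cB(\Omega)$ applied to the genuine, non-constant kernels $\vec\varphi_\e$, $\vec\psi_{j,\e}$; for that quantity the two quoted asymptotic estimates do suffice immediately. But that quantity is not what the definition of $\cN^d(\Omega)$ asks of the element $R^d\in\cE^d(\Omega)$, which by construction only sees the frozen kernels $[x'\mapsto\vec\varphi_\e(x)]$. Your literal reading of the definitions exposes this discrepancy rather than committing it, and your diagnosis of where the estimate breaks is accurate; but the proposed repair is not carried out, and the example above shows it cannot be closed without modifying either the definition of $R^d$ or the negligibility test it is measured against.
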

\begin{proof}
 \ref{37.1} and \ref{37.2} are clear from the definition. \ref{37.3} and \ref{37.4} follow directly from the estimates
\begin{gather*}
 \norm{\vec\varphi_\e}_{K,c; L, l} = O(\e^{-N})\qquad \textrm{ for some }N,\\
 \norm{\vec\varphi_\e - \vec\delta}_{K,c; B} = O(\e^N)\qquad \textrm{ for all }N,
\end{gather*}
which hold by definition of the spaces $S(\Omega)$ and $S^0(\Omega)$.
\end{proof}

\subsection{The elementary algebra}


For Colombeau's elementary algebra we employ the formulation of \cite[Section 1.4]{GKOS}, Section 1.4. For $k \in \bN_0$ we let $\cA_k(\bR^n)$ be the set of all $\varphi \in \cD(\bR^n)$ with integral one such that, if $k \ge 1$, all moments of $\varphi$ order up to $k$ vanish.
\begin{align*}
 U^e(\Omega) & \coleq \{ (\varphi, x) \in \cA_0(\bR^n) \times \Omega\ |\ x + \supp \varphi \subseteq \Omega \} \\
 \cE^e(\Omega) & \coleq \{ R \colon U^e(\Omega) \to \bC\ |\ \forall \varphi \in \cA_0(\bR^n): R(\varphi,.)\textrm{ is smooth} \} \\
\cE^e_M(\Omega) & \coleq \{ R \in \cE^e(\Omega) \ |\ \forall K \csub \Omega\ \forall m \in \bN_0\ \exists N \in \bN\ \forall \varphi \in \cA_N(\bR^n): \\
\qquad\qquad& \norm{ R(S_\e\varphi, .) }_{K,m} = O(\e^{-N}) \} \\
\cN^e(\Omega) & \coleq \{ R \in \cE^e(\Omega) \ |\ \forall K \csub \Omega\ \forall m \in \bN_0\ \forall N \in \bN\ \exists q \in \bN\ \forall \varphi \in \cA_q(\bR^n): \\
\qquad\qquad& \norm{ R(S_\e\varphi, .) }_{K,m} = O(\e^N) \} \\
\cG^e(\Omega) &\coleq \cE^e_M(\Omega) / \cN^e(\Omega) \\
(\iota^e u)(\varphi,x) & \coleq \langle u, \varphi(.-x) \rangle \\
(\sigma^e f)(\varphi,x) & \coleq f(x)
\end{align*}

\begin{definition}For $R \in \basice(\Omega)$ we define $R^e \in \cE^e(\Omega)$ by $R^e(\varphi, x) \coleq R(\varphi, x)$.
\end{definition}

\begin{proposition}\label{38}
 \begin{enumerate}[label=(\roman*)]
  \item\label{38.1} $(\iotae u)^e = \iota^e u$ for $u \in \cD'(\Omega)$.
  \item\label{38.2} $(\sigmae f)^e = \sigma^e u$ for $f \in C^\infty(\Omega)$.
  \item\label{38.3} $R^e \in \cE^e_M(\Omega)$ for $R \in \mode(\Omega)$.
  \item\label{38.4} $R^e \in \cN^e(\Omega)$ for $R \in \nege(\Omega)$.
 \end{enumerate}
\end{proposition}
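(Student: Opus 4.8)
The plan is to take the four assertions in the order \ref{38.1}, \ref{38.2}, \ref{38.3}, \ref{38.4}, using throughout the simplified tests of \Cref{emodnegchar} and the scaling behaviour of the relevant seminorms under $\varphi \mapsto S_\e\varphi$. Parts \ref{38.1} and \ref{38.2} require nothing beyond unravelling definitions: since $\cA_0(\bR^n) \times \Omega \subseteq U(\Omega)$ and $R^e$ is by construction the restriction of $R$ to $U^e(\Omega)$, and since already at the level of $\basice$ one has $(\iotae u)(\varphi,x) = \langle u, \varphi(\,\cdot - x)\rangle = (\iota^e u)(\varphi,x)$ and $(\sigmae f)(\varphi,x) = f(x) = (\sigma^e f)(\varphi,x)$, passing to the restricted domain changes nothing.

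For \ref{38.3} I would fix $K \csub \Omega$ and $m \in \bN_0$; by \Cref{emodnegchar}, moderateness of $R$ gives $r > 0$ and then, for this $m$, some $c \in \bN_0$ and $\lambda \in \poly_0$ with $\norm{R(\psi,\cdot)}_{K,m} \le \lambda(\norm{\psi}_c)$ whenever $\supp \psi \subseteq B_r(0)$. The one elementary fact needed is the scaling estimate $\norm{S_\e\varphi}_c \le \e^{-n-c}\norm{\varphi}_c$ for $\e \le 1$. For a fixed $\varphi$ and $\e$ small enough one has $\supp(S_\e\varphi) = \e\,\supp\varphi \subseteq B_r(0)$, hence $\norm{R(S_\e\varphi,\cdot)}_{K,m} \le \lambda(\e^{-n-c}\norm{\varphi}_c) = O(\e^{-(n+c)\deg\lambda})$; taking $N \coleq \max(1,(n+c)\deg\lambda)$, which depends only on $K$ and $m$, gives $\norm{R(S_\e\varphi,\cdot)}_{K,m} = O(\e^{-N})$ for every $\varphi$, in particular for every $\varphi \in \cA_N(\bR^n)$, so $R^e \in \cE^e_M(\Omega)$.

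For \ref{38.4} I would fix $K \csub \Omega$, $m \in \bN_0$ and $N \in \bN$; by \Cref{emodnegchar}, negligibility of $R$ gives $r > 0$, then $c \in \bN_0$, $\lambda \in \poli_0$ and a bounded $B \subseteq C^\infty(\Omega)$ with $\norm{R(\psi,\cdot)}_{K,m} \le \lambda(\norm{\psi}_c, \norm{\starred\psi - \vec\delta}_{K,c;B})$ whenever $\supp\psi \subseteq B_r(0)$. Besides $\norm{S_\e\varphi}_c = O(\e^{-n-c})$ as above, the key ingredient is the claim that for $\varphi \in \cA_q(\bR^n)$ one has $\norm{\starred{S_\e\varphi} - \vec\delta}_{K,c;B} = O(\e^{q+1})$, with the implied constant uniform in $f \in B$. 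I would prove this by the substitution $y = x + \e z$, after which the $x$-derivatives fall on $f$ and $\langle f(y), \pd_x^\alpha \starred{S_\e\varphi}(x)(y)\rangle$ becomes $\int (\pd^\alpha f)(x+\e z)\,\varphi(z)\,\dif z$, followed by a Taylor expansion of $(\pd^\alpha f)(x+\e z)$ about $x$ to order $q$: since $\int\varphi = 1$ and all moments of $\varphi$ of orders $1,\dots,q$ vanish, the Taylor polynomial part integrates against $\varphi$ to exactly $(\pd^\alpha f)(x)$, which cancels the contribution of $\vec\delta$, leaving only the order-$(q+1)$ remainder, which for $\e$ small is bounded by $\e^{q+1}$ times a constant built from $\varphi$ and from $\norm{f}_{\overline{B_r(K)},\,q+1+c}$; the latter is bounded uniformly over $f \in B$ because $B$ is bounded in $C^\infty(\Omega)$ and $\overline{B_r(K)} \csub \Omega$. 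This is exactly the assertion that $\iota^e f - \sigma^e f$ lies in $\cN^e(\Omega)$ uniformly for $f \in B$, cf.\ \cite[Section~1.4]{GKOS} and the analogous step in the proof of \Cref{35}.

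It then remains to combine the two estimates: since every monomial of $\lambda \in \poli_0$ carries a positive power of its second argument, substituting $\norm{S_\e\varphi}_c = O(\e^{-n-c})$ and $\norm{\starred{S_\e\varphi} - \vec\delta}_{K,c;B} = O(\e^{q+1})$ gives, for $\e$ small and $\varphi \in \cA_q(\bR^n)$, $\norm{R(S_\e\varphi,\cdot)}_{K,m} = O(\e^{\,q+1-(n+c)\deg_{y_0}\lambda})$, so choosing $q$ with $q+1 \ge N + (n+c)\deg_{y_0}\lambda$ forces $O(\e^N)$ and hence $R^e \in \cN^e(\Omega)$. The only real obstacle is the estimate $\norm{\starred{S_\e\varphi} - \vec\delta}_{K,c;B} = O(\e^{q+1})$ in \ref{38.4} — one must pin down the convergence rate of the mollification to the identity in the $\norm{\cdot}_{K,c;B}$-seminorm in terms of the number $q$ of vanishing moments, uniformly over the bounded set $B$; with that in hand, the divisibility property of $\poli_0$ together with elementary bookkeeping of exponents makes everything else automatic, essentially transcribing estimates already carried out in the proofs of \Cref{negmod}, \Cref{T1} and \Cref{35}.
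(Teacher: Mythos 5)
Your proof is correct and follows essentially the same route as the paper: parts (i) and (ii) by unravelling definitions, part (iii) from \Cref{emodnegchar} plus the scaling $\norm{S_\e\varphi}_c = O(\e^{-n-c})$, and part (iv) from the moment estimate $\norm{\starred{S_\e\varphi}-\vec\delta}_{K,c;B}=O(\e^{q+1})$ combined with the fact that every monomial of $\lambda\in\poli_0$ carries a positive power of its $z$-argument. The only difference is that you spell out the Taylor-expansion proof of that moment estimate, which the paper merely asserts; your version of it is correct.
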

\begin{proof}
 Again, \ref{38.1} and \ref{38.2} are clear from the definition. For \ref{38.3}, fix $K \csub \Omega$ and $m \in \bN_0$. From \Cref{emodnegchar} we obtain $r$, $c$ and $\lambda$ such that for $\supp \varphi \subseteq B_r(0)$, $\norm{R(\varphi, .)}_{K,m} \le \lambda ( \norm{\varphi}_c)$.
For $\varphi \in \cA_0(\bR^n)$ and $\e$ small enough, $\supp S_\e\varphi \subseteq B_r(0)$, so we only have to take into account that $\norm{S_\e\varphi}_c = O(\e^{-N})$ for some $N \in \bN$. Similarly, \ref{38.4} is obtained from the fact that given any $N$, for $q$ large enough we have $\norm{ (S_\e\varphi)^* - \vec\delta}_{K,c; B} = O(\e^N)$ for all $\varphi \in \cA_q(\bR^n)$.
\end{proof}

\textbf{Acknowledgments.} This research was supported by project P26859-N25 of the Austrian Science Fund (FWF).

\printbibliography

\end{document}